\documentclass[preprint,12pt]{elsarticle}

\usepackage{amscd}
\usepackage{amsmath}
\usepackage{amsthm}
\usepackage{amsfonts}
\usepackage{graphicx}
\usepackage{amssymb}
\usepackage{color}
\usepackage{amsbsy}
\usepackage{graphicx}
\usepackage{amssymb,color,amsbsy}
\usepackage{mathtools}
\usepackage{leftindex}

\usepackage{float}
\usepackage{comment}

\usepackage{stmaryrd}

\usepackage[ruled]{algorithm2e}

\usepackage[matrix,arrow]{xy}

\DeclareMathAlphabet{\mathpzc}{OT1}{pzc}{m}{it}

\usepackage{pgfpages}
\usepackage{tikz}
\usetikzlibrary{shapes.geometric}
\usetikzlibrary{decorations.pathmorphing}
\usetikzlibrary{arrows}
\usetikzlibrary{backgrounds}
\usetikzlibrary{positioning}
\usetikzlibrary{fit}
\usepackage{caption}
\usepackage{amsthm}

\usepackage{amsmath}

\usepackage[pagebackref=true, bookmarksopen=true,colorlinks=true, linkcolor=red,citecolor=blue]{hyperref}

\global\long\def\x{\mathcal{\times}}%

\global\long\def\j{\mathcal{J}}%

\global\long\def\ñ{\sim}%

\usepackage{tikz-cd}

\usepackage{tkz-graph}

\usepackage{multicol}

\usepackage{subcaption}

\newtheorem{theorem}{Theorem}[section]

\newtheorem{corollary}[theorem]{Corollary}

\newtheorem{example}[theorem]{Example}

\newtheorem{proposition}[theorem]{Proposition}

\newtheorem{question}[theorem]{Question}

\usepackage[capitalise]{cleveref}  
\crefname{theorem}{Theorem}{Theorems}
\Crefname{theorem}{Theorem}{Theorems}
\crefname{corollary}{Corollary}{Corollaries}
\Crefname{corollary}{Corollary}{Corollaries}
\crefname{proposition}{Proposition}{Propositions}
\Crefname{proposition}{Proposition}{Propositions}
\crefname{example}{Example}{Examples}
\Crefname{example}{Example}{Examples}
\crefname{question}{Question}{Questions}
\Crefname{question}{Question}{Questions}

\newcommand{\lk}{\operatorname{lk}}
\newcommand{\cl}{\operatorname{cl}}
\newcommand{\codeg}{\operatorname{codeg}}
\newcommand{\sat}{\operatorname{sat}}

\newcommand{\widx}{\operatorname{w}}

\usepackage{comment}

\begin{document}
\begin{frontmatter} 
\title{A Ridge--Saturation Characterization of \texorpdfstring{$\alpha$-Critical $\mathbf W_p$}{alpha-Critical Wp} Graphs}

\author[Hoang]{Do Trong Hoang}\ead{hoang.dotrong@hust.edu.vn}

\author[LEVIT]{Vadim E. Levit}  \ead{levitv@ariel.ac.il}

\author[MANDRESCU]{Eugen Mandrescu}  \ead{eugen_m@hit.ac.il}
 
\author[DEPTO]{Kevin Pereyra}  \ead{kdpereyra@unsl.edu.ar, kevin.pereyra767@gmail.com}

 \address[Hoang]{Faculty of Mathematics and Informatics, 
 Hanoi University of Science and Technology, 
 1 Dai Co Viet, Bach Mai, Hanoi, Vietnam}
 
  \address[LEVIT]{Department of Mathematics, Ariel University, Ariel, Israel.}

  \address[MANDRESCU]{Department of Computer Science, Holon Institute of Technology, Holon, Israel.}

 
 \address[DEPTO]{Departamento de Matem\'atica, Universidad Nacional de San Luis, San Luis, Argentina.}  

\begin{abstract}
 We characterize the graphs which are simultaneously $\alpha$-critical and members of the class $\mathbf W_p$.  The characterization is stated in three equivalent languages.  In the graph itself, such a graph is a well-covered graph whose codimension-one localization fibers all have size at least $p$ and whose edges are exactly covered by the cliques induced by those fibers.  In the independence complex, it is a pure flag complex in which every ridge has degree at least $p$ and every missing edge is generated by the link of a ridge.  In the complement, it is a $K_{r+1}$-saturated graph, where $r=\alpha(G)$, all maximal cliques have size $r$, and the minimum $(r-1)$-clique-codegree is at least $p$.  This gives an exact formula for the largest $p$ for which a well-covered graph belongs to $\mathbf W_p$.  We make this complement correspondence explicit, record saturation-theoretic consequences including dense-complement rigidity and $p$-sensitive edge and order bounds, and give a family of sharp examples showing that the local sufficient condition from the recent work of Hoang, Levit and Mandrescu is not necessary outside the locally triangle-free setting, for all $p\ge2$.
 \end{abstract}

\begin{keyword}  well-covered graph; $\mathbf W_p$ graph; $\alpha$-critical graph; saturated graph; independence complex; clique codegree. \MSC[2020] 05C70, 05C75 \end{keyword}

\end{frontmatter} %
\section{Introduction}
 
 All graphs in this paper are finite, simple, and non-empty.  We write $\Omega^*(G)$ for the family of all independent sets of $G$ and $\Omega(G)$ for the family of maximum independent sets.  A graph is \emph{well-covered} if all maximal independent sets have the same cardinality.  This notion was introduced by Plummer and has since become a central object in the structure theory of independence in graphs \cite{Plummer1970,Plummer1993}.  General recognition and structural classification of well-covered graphs is difficult; useful positive results are known only under additional restrictions or in special families, including girth constraints, planar families, product constructions, and algebraic subclasses \cite{FinbowHartnellNowakowski1993,Pinter1995,ToppVolkmann1992,CastrillonCruzReyes2016,Woodroofe2009}.
 
 Staples introduced the classes $\mathbf W_p$: a graph $G$ belongs to $\mathbf W_p$ if every $p$ pairwise disjoint sets in $\Omega^*(G)$ can be extended to $p$ pairwise disjoint sets in $\Omega(G)$ \cite{Staples1975,Staples1979}.  Thus $\mathbf W_1$ is the class of well-covered graphs, and the classes form the descending chain
 \[
 \mathbf W_1 \supseteq \mathbf W_2 \supseteq \cdots \supseteq \mathbf W_p \supseteq \cdots .
 \]
 The subclass $\mathbf W_2$ is the class of $1$-well-covered graphs without isolated vertices; it was studied by Staples, Pinter, and later by Levit and Mandrescu through shedding vertices, localizations, and differential conditions \cite{Staples1979,Pinter1991,Pinter1992,LevitMandrescu2017,LevitMandrescu2019,BermudoFernau2012}.  Further constructions and polynomial properties of $\mathbf W_p$ graphs appear in \cite{Favaron1982,HoangLevitMandrescuPhamSSRN,HoangLevitMandrescuPhamLog}.  Connections with Cohen--Macaulayness, Gorenstein graphs, Buchsbaum properties, and edge ideals are developed in \cite{HoangTrung2016,HoangTrung2018,JaramilloVillarreal2021,CastrillonCruzReyes2016,Woodroofe2009}.
 
 An edge $e$ of $G$ is $\alpha$-critical if deleting $e$ increases the independence number.  The graph is $\alpha$-critical if every edge is $\alpha$-critical.  This is the independence formulation of the classical line-critical graphs studied by Erd\H{o}s and Gallai, Beineke--Harary--Plummer, Berge, and Plummer \cite{ErdosGallai1961,BeinekeHararyPlummer1967,Berge1982,Plummer1967}.  Plummer asked for a characterization of graphs which are both $\alpha$-critical and well-covered, and in particular for the class $\mathbf W_2$ \cite{Plummer1993}.  A recent paper of Hoang, Levit and Mandrescu developed new characterizations of $\mathbf W_p$ graphs and gave a characterization under a locally triangle-free hypothesis \cite{HoangLevitMandrescu2025}.  The locally triangle-free part relies on earlier triangle-free and algebraic results of Hoang and Trung \cite{HoangTrung2016,HoangTrung2018}.
 
 The purpose of this note is to give an exact characterization for all $p$.  The key point is that the $\mathbf W_p$ condition is not best viewed vertex by vertex.  It is a ridge-thickness condition on the independence complex.  Let $r=\alpha(G)$, and let $S$ be an independent set of size $r-1$.  Define the \emph{fiber over $S$}
 \[
 F_G(S)=\{x\in V(G)\setminus S: S\cup\{x\}\text{ is a maximum independent set of }G\}.
 \]
 Equivalently, $F_G(S)$ is the vertex set of the localization $G_S=G-N_G[S]$.  If $G$ is well-covered, every such $F_G(S)$ is a clique of $G$.  The main theorem says that the desired graphs are exactly those for which these fiber cliques are large and cover all edges.
 
 \begin{theorem}[Main characterization]\label{thm:main-intro}
 Let $p\ge 1$, let $G$ be a graph, and put $r=\alpha(G)$.  The following are equivalent.
 \begin{enumerate}
 \item[(a)] $G$ is $\alpha$-critical and $G\in \mathbf W_p$.
 \item[(b)] $G$ is well-covered, $|F_G(S)|\ge p$ for every independent set $S$ with $|S|=r-1$, and
 \[
 E(G)=\bigcup_{\substack{S\in\Omega^*(G)\\ |S|=r-1}} \binom{F_G(S)}{2}.
 \]
 \item[(c)] If $H=\overline G$, then $H$ is $K_{r+1}$-saturated, every maximal clique of $H$ has size $r$, and every $(r-1)$-clique of $H$ is contained in at least $p$ copies of $K_r$.
 \end{enumerate}
 \end{theorem}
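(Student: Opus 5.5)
The plan is to reduce everything to two local facts about the fibers $F_G(S)$, one for $\alpha$-criticality and one for membership in $\mathbf W_p$, and then to translate (b) into the complement to get (c). Throughout, $r=\alpha(G)$, and $S$ ranges over independent sets with $|S|=r-1$.

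\emph{Step 1 (fibers are cliques).} Suppose $x,y\in F_G(S)$ are distinct and nonadjacent. Then $S\cup\{x,y\}$ is independent of size $r+1$, which is impossible. So every fiber is a clique; this uses only that $r=\alpha(G)$.

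\emph{Step 2 ($\alpha$-criticality).} I will show that an edge $uv$ is $\alpha$-critical if and only if $\{u,v\}\subseteq F_G(T)$ for some independent $T$ with $|T|=r-1$. Suppose $\alpha(G-uv)=r+1$, and let $I$ be a maximum independent set of $G-uv$. It must contain both $u$ and $v$, since otherwise it would be independent in $G$. Put $T=I\setminus\{u,v\}$. Then $T\cup\{u\}$ and $T\cup\{v\}$ are independent in $G$ of size $r$, so $u,v\in F_G(T)$. Conversely, if $u,v\in F_G(T)$, then $T\cup\{u,v\}$ is independent in $G-uv$ and has size $r+1$. Hence $G$ is $\alpha$-critical exactly when $E(G)$ is the union of the sets $\binom{F_G(S)}{2}$. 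The inclusion $\supseteq$ is automatic by Step 1.

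\emph{Step 3 ($\mathbf W_p$ as ridge thickness).} This is the main obstacle. I need: a graph $G$ lies in $\mathbf W_p$ if and only if $G$ is well-covered and $|F_G(S)|\ge p$ for every $S$.
\begin{itemize}
\item For necessity, I would start from $S$ together with sets built so that each of the $p$ maximum extensions is forced to meet $F_G(S)$ in a different vertex. If I cannot arrange this directly, I would instead invoke the localization characterization of $\mathbf W_p$ from \cite{HoangLevitMandrescu2025}: $G\in\mathbf W_p$ if and only if $G$ is well-covered and every localization $G_S$ of a suitable independent set lies in $\mathbf W_p$. For $|S|=r-1$, $G_S$ is the complete graph on $F_G(S)$, and $K_m\in\mathbf W_p$ if and only if $m\ge p$.
\item For sufficiency, I would argue by induction on $r$, passing to localizations $G_{\{v\}}$. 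The idea is to extend $p$ disjoint independent sets one vertex at a time, while keeping the sets pairwise disjoint. At the last step, the fiber bound $|F_G(S)|\ge p$ gives enough vertices to make the final choices distinct, via a Hall-type counting argument.
\end{itemize}
Combining Steps 2 and 3 gives (a)$\Leftrightarrow$(b).

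\emph{Step 4 ((b)$\Leftrightarrow$(c)).} Pass to $H=\overline G$. Independent sets of $G$ are exactly the cliques of $H$. Hence well-coveredness of $G$ says that all maximal cliques of $H$ have size $r$. In particular $H$ has no $K_{r+1}$. For an $(r-1)$-clique $S$ of $H$, the fiber $F_G(S)$ is the set of vertices completing $S$ to a $K_r$ in $H$. So $|F_G(S)|\ge p$ says that every $(r-1)$-clique of $H$ lies in at least $p$ copies of $K_r$.

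It remains to translate the edge-covering condition. Adding a nonedge $uv$ of $H$ (an edge of $G$) creates a $K_{r+1}$ exactly when some $(r-1)$-clique $S$ is joined to both $u$ and $v$. That means $u,v\in F_G(S)$. So the covering condition in (b) is precisely $K_{r+1}$-saturation of $H$.

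The delicate point is the necessity direction in Step 3, where disjointness has to be forced to land inside a single fiber; I would try the localization route first.
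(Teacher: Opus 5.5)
Your Steps 1, 2 and 4, and the necessity half of Step 3, match the paper's argument: the fiber-cover criterion for $\alpha$-criticality, the complement translation, and $G_S\cong K_{|F_G(S)|}$ via the localization part of Theorem~\ref{thm:HLM-local}. The gap is the sufficiency half of Step 3, which is the only non-formal part of the theorem and which you leave as a plan.

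``Extend the $p$ sets one vertex at a time while keeping them disjoint'' requires the following. For a current set $B$ with $|B|=t\le r-2$, some vertex of $V(G_B)=V(G)\setminus N_G[B]$ must lie outside the other $p-1$ sets. The hypothesis only speaks about ridges, and nothing you wrote rules out $V(G_B)$ being swallowed by the other sets. You also do not say what the induction on $r$ through $G_{\{v\}}$ supplies, or what the Hall condition is and why it holds. So the step that carries the content is asserted, not proved.

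You also place the difficulty on the wrong side: necessity needs no localization at all. If $|F_G(S)|=m<p$, take $A_1=S$, the $m$ fiber vertices as singletons, and empty sets for the rest. Then $M_1=S\cup\{f\}$ with $f\in F_G(S)$ collides with the set containing $\{f\}$.

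The missing idea is a counting lemma.
\begin{itemize}
\item Let $B$ be independent with $|B|=t<r$. By well-coveredness, extend it to a maximum independent set $B\cup M$ with $|M|=s=r-t$.
\item The $s$ ridges $B\cup(M\setminus\{m\})$ have fibers of size at least $p$, all contained in $V(G_B)$.
\item These fibers are pairwise disjoint: a common vertex of two of them would be non-adjacent to all of $B\cup M$, giving an independent set of size $r+1$.
\item Hence $|V(G_B)|\ge ps$. Each other set meets $V(G_B)$ in an independent set of size at most $\alpha(G_B)\le s$, so at least $s\ge 1$ admissible vertices remain.
\end{itemize}
With this, the greedy extension never stalls at any level, including the last. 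No induction and no Hall argument are needed, and $B=\emptyset$ gives $n(G)\ge pr\ge p$. This yields a self-contained proof of the ridge-thickness characterization.

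The paper instead argues by induction on $r$. The localization $G_x$ is well-covered with $\alpha(G_x)=r-1$ and $F_{G_x}(T)=F_G(T\cup\{x\})$, so $G_x\in\mathbf W_p$ for every $x$. The converse direction of Theorem~\ref{thm:HLM-local} (Theorem 3.2 of \cite{HoangLevitMandrescu2025}) then gives $G\in\mathbf W_p$. That is the same localization characterization you were already prepared to cite for necessity.
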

 
 The complement formulation connects the problem with graph saturation, initiated by Erd\H{o}s, Hajnal and Moon \cite{ErdosHajnalMoon1964}.  It separates the two mechanisms: $\alpha$-criticality becomes $K_{r+1}$-saturation, whereas membership in $\mathbf W_p$ becomes a lower bound on the clique-codegree of $(r-1)$-cliques.  This correspondence also yields complement-degree and small-order rigidity consequences and isolates a refined edge-localization problem.

The final examples emphasize why this formulation is different from the edge-localization sufficient condition of \cite{HoangLevitMandrescu2025}.  In particular, for every $q\ge2$ we include a connected graph in $\mathbf W_q$ which is $\alpha$-critical, but for which $G_{ab}$ is not even well-covered for a suitable edge $ab$.

 \section{Preliminaries}
 
 We fix the notation used in the rest of the paper.
 
 Let $G$ be a finite simple graph.  Its vertex and edge sets are $V(G)$ and $E(G)$, its order is $n(G)=|V(G)|$, and its complement is $\overline G$.  The complete graph on $t$ vertices is denoted by $K_t$.  A $t$-clique of a graph $G$ is a vertex set inducing a copy of $K_t$, and
 \[
 \mathcal K_t(G)=\{Q\subseteq V(G): G[Q]\cong K_t\}
 \]
 denotes the family of all $t$-cliques of $G$.  The clique number of $G$ is $\omega(G)=\max\{t:\mathcal K_t(G)\neq\emptyset\}$.  The minimum degree of $G$ is $\delta(G)$.
 
 For $t\ge2$, a graph $G$ is \emph{$K_t$-free} if $\mathcal K_t(G)=\emptyset$.  It is \emph{$K_t$-saturated} if it is $K_t$-free and, for every non-edge $xy$ of $G$, the graph $G+xy$ contains a copy of $K_t$.  Equivalently, $G$ is a maximal $K_t$-free graph on its fixed vertex set.  This is the standard saturation notion introduced by Erd\H{o}s, Hajnal and Moon \cite{ErdosHajnalMoon1964}.  A graph is $r$-partite if its vertex set can be partitioned into $r$ independent sets.  It is complete $r$-partite if every two vertices in different parts are adjacent.
 
 An \emph{independent set} of $G$ is a set of pairwise non-adjacent vertices.  We use
 \[
 \Omega^*(G)=\{S\subseteq V(G):S\text{ is independent in }G\}
 \]
 for the family of all independent sets of $G$.  The independence number is
 \[
 \alpha(G)=\max\{|S|:S\in\Omega^*(G)\},
 \]
 and
 \[
 \Omega(G)=\{S\in\Omega^*(G):|S|=\alpha(G)\}
 \]
 is the family of maximum independent sets.  An independent set is maximal if it is inclusion-maximal in $\Omega^*(G)$; it need not belong to $\Omega(G)$.  The graph $G$ is \emph{well-covered} if every maximal independent set belongs to $\Omega(G)$, equivalently, if all maximal independent sets have cardinality $\alpha(G)$ \cite{Plummer1970,Plummer1993}.
 
 For a positive integer $p$, a graph $G$ belongs to $\mathbf W_p$ if $n(G)\ge p$ and, whenever $A_1,\ldots,A_p\in\Omega^*(G)$ are pairwise disjoint, there exist pairwise disjoint sets $M_1,\ldots,M_p\in\Omega(G)$ such that $A_i\subseteq M_i$ for all $i$.  Thus $\mathbf W_1$ is the class of well-covered graphs, and
 \[
 \mathbf W_1\supseteq \mathbf W_2\supseteq\cdots\supseteq\mathbf W_p\supseteq\cdots .
 \]
 The classes $\mathbf W_p$ were introduced by Staples \cite{Staples1975,Staples1979}.
 
 For $S\subseteq V(G)$, set
\begin{eqnarray*}
 N_G(S)&=&\{v\in V(G)\setminus S: v\text{ is adjacent to some vertex of }S\},
\\
N_G[S]&=&S\cup N_G(S),  
\end{eqnarray*} 
 and define the \emph{localization} of $G$ at $S$ by
 \[
 G_S=G-N_G[S].
 \]
 For $S=\{x\}$ we write $G_x$.  For an edge $xy\in E(G)$, we write
 \[
 G_{xy}=G-(N_G(x)\cup N_G(y));
 \]
 since $xy$ is an edge, this agrees with $G_{\{x,y\}}$.
 
 If $e\in E(G)$, then $G-e$ denotes the graph obtained by deleting $e$ and keeping the same vertex set.  The edge $e$ is \emph{$\alpha$-critical} if $\alpha(G-e)>\alpha(G)$; since deleting one edge can increase $\alpha$ by at most one, this is equivalent to $\alpha(G-e)=\alpha(G)+1$.  A graph is \emph{$\alpha$-critical} if every edge is $\alpha$-critical.  This notion goes back to the line-critical graph literature of Erd\H{o}s--Gallai, Beineke--Harary--Plummer, Berge, and Plummer \cite{ErdosGallai1961,BeinekeHararyPlummer1967,Berge1982,Plummer1967}.
 
 A \emph{simplicial complex} $\Delta$ on a finite vertex set $X$ is a family of subsets of $X$ which contains the empty set and is closed under taking subsets.  The elements of $\Delta$ are its \emph{faces}.  The vertices of $\Delta$ are the one-element faces, and we write $V(\Delta)$ for their set.  The \emph{dimension} of a face $F\in\Delta$ is
 \[
 \dim F=|F|-1,
 \]
 with the convention $\dim\emptyset=-1$.  The \emph{dimension} of $\Delta$ is
 \[
 \dim\Delta=\max\{\dim F:F\in\Delta\};
 \]
 equivalently, $\dim\Delta=d$ means that the largest faces of $\Delta$ have cardinality $d+1$.  A \emph{facet} is an inclusion-maximal face.  The complex $\Delta$ is \emph{pure} if all its facets have dimension $\dim\Delta$, equivalently, if all its facets have the same cardinality.
 
 A subset $A\subseteq V(\Delta)$ is a \emph{non-face} if $A\notin\Delta$.  It is a \emph{minimal non-face} if $A\notin\Delta$ but every proper subset of $A$ is a face.  A \emph{missing edge} of $\Delta$ is a minimal non-face of cardinality two.  The \emph{$1$-skeleton} of $\Delta$ is the graph on $V(\Delta)$ whose edges are the two-element faces of $\Delta$.  The complex $\Delta$ is \emph{flag} if every clique of its $1$-skeleton is a face of $\Delta$.  Equivalently, $\Delta$ is flag if all its minimal non-faces have cardinality two; in this case the whole complex is determined by its $1$-skeleton.
 
 The \emph{independence complex} of $G$ is the simplicial complex
 \[
 \Delta(G)=\Omega^*(G),
 \]
 whose faces are the independent sets of $G$.  Hence $\dim\Delta(G)=\alpha(G)-1$.  Moreover, $G$ is well-covered with $\alpha(G)=r$ if and only if $\Delta(G)$ is pure of dimension $r-1$.  The complex $\Delta(G)$ is flag: a set of vertices fails to be independent precisely when it contains an edge of $G$, so its minimal non-faces are exactly the two-element sets $\{x,y\}$ with $xy\in E(G)$.  Thus the missing edges of $\Delta(G)$ are exactly the edges of $G$, and the $1$-skeleton of $\Delta(G)$ is the complement graph $\overline G$.
 
 For a face $S$ of a simplicial complex $\Delta$, the \emph{link} of $S$ in $\Delta$ is
 \[
 \lk_\Delta(S)=\{T\in\Delta:T\cap S=\emptyset\text{ and }S\cup T\in\Delta\}.
 \]
 If $\Delta$ is pure of dimension $r-1$, a \emph{ridge} of $\Delta$ is a face of dimension $r-2$, equivalently a face of cardinality $r-1$.  The \emph{ridge degree} of a ridge $S$ is the number of vertices in its link:
 \[
 \deg_\Delta(S)=|V(\lk_\Delta(S))|.
 \]
 When $\Delta=\Delta(G)$ and $S\in\Omega^*(G)$ has $|S|=\alpha(G)-1$, this ridge degree is the size of the fiber
 \[
 F_G(S)=\{x\in V(G)\setminus S:S\cup\{x\}\in\Omega(G)\}.
 \]
 Equivalently,
 \[
 F_G(S)=V(G_S)=V(\lk_{\Delta(G)}(S)).
 \]
 The set $F_G(S)$ is always a clique of $G$: if two distinct vertices of $F_G(S)$ were non-adjacent, then together with $S$ they would form an independent set larger than $\alpha(G)$.
 
 For a graph $H$ with clique number $r$, and for $Q\in\mathcal K_{r-1}(H)$, the \emph{$(r-1)$-clique-codegree} of $Q$ is
 \[
 \codeg_H(Q)=\bigl|\{x\in V(H)\setminus Q:Q\cup\{x\}\in\mathcal K_r(H)\}\bigr|.
 \]
 The minimum $(r-1)$-clique-codegree of $H$ is
 \[
 \delta_{r-1}^{\cl}(H)=\min\{\codeg_H(Q):Q\in\mathcal K_{r-1}(H)\}.
 \]
 For $r=1$, we use the convention that $\mathcal K_0(H)=\{\emptyset\}$ and $\delta_0^{\cl}(H)=|V(H)|$.
 
 For a well-covered graph $G$, the \emph{$\mathbf W$-index} is
 \[
 \widx(G)=\max\{p\ge1:G\in\mathbf W_p\}.
 \]
 
 We shall use the following localization theorem for $\mathbf W_p$ graphs.
 
 \begin{theorem} \label{thm:HLM-local}
 Let $p\ge 1$ and let $G$ be a graph with $\alpha(G)\ge 2$.  Then
 \[
 G\in \mathbf W_p
 \quad\Longleftrightarrow\quad
 G_x\in \mathbf W_p\text{ and }\alpha(G_x)=\alpha(G)-1\text{ for every }x\in V(G).
 \]
 Moreover, if $G\in \mathbf W_p$ and $S\in\Omega^*(G)$ satisfies $|S|<\alpha(G)$, then $G_S\in\mathbf W_p$ and $\alpha(G_S)=\alpha(G)-|S|$.
 \end{theorem}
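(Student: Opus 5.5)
The plan is to avoid working with $p$-tuples of disjoint independent sets directly. Instead I would pass through Staples' deletion characterization of $\mathbf W_p$ \cite{Staples1979}: a graph $G$ with $n(G)\ge p$ lies in $\mathbf W_p$ if and only if, for every $U\subseteq V(G)$ with $|U|\le p-1$, the graph $G-U$ is well-covered and $\alpha(G-U)=\alpha(G)$. Both directions then reduce to one identity, valid whenever $x\notin U$:
\[
(G-U)_x=G_x-(U\cap V(G_x)).
\]
I would also use two standard facts about a well-covered graph $K$ and a vertex $x$. First, $K_x$ is well-covered with $\alpha(K_x)=\alpha(K)-1$. Second, $I$ is maximal independent in $K$ with $x\in I$ if and only if $I\setminus\{x\}$ is maximal independent in $K_x$.

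For ($\Rightarrow$), $G$ is well-covered because $\mathbf W_p\subseteq\mathbf W_1$, so $\alpha(G_x)=\alpha(G)-1$. Fix $x$ and a set $U\subseteq V(G_x)$ with $|U|\le p-1$. By Staples' criterion, $G-U$ is well-covered with $\alpha(G-U)=\alpha(G)$. By the identity, $G_x-U=(G-U)_x$, so this graph is well-covered with $\alpha(G_x-U)=\alpha(G)-1=\alpha(G_x)$. To get $n(G_x)\ge p$, I would take $U$ to be $p-1$ vertices of $N(x)$, or all of $N(x)$ if it is smaller. In $G-U$ the vertex $x$ must lie in a maximum independent set avoiding the removed neighbours, and counting fibers should then give the bound. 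I would first check this on small cases, since it is a bookkeeping point rather than a structural one. With the criterion applied to $G_x$, this direction is done.

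For ($\Leftarrow$), take $U$ with $|U|\le p-1$ and a maximal independent set $I$ of $G-U$. The set $I$ is nonempty, because $n(G)\ge n(G_x)+1>p-1$ and so $G-U$ has vertices. Pick $x\in I$. Then $I\setminus\{x\}$ is maximal independent in $(G-U)_x=G_x-(U\cap V(G_x))$. This graph is well-covered with independence number $\alpha(G_x)=\alpha(G)-1$, by Staples' criterion applied to $G_x\in\mathbf W_p$. Hence $|I|=\alpha(G)$ for every maximal $I$, so $G-U$ is well-covered with $\alpha(G-U)=\alpha(G)$, and the criterion gives $G\in\mathbf W_p$. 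The ``moreover'' part then follows by induction on $|S|$. The key observation is $G_{S\cup\{y\}}=(G_S)_y$ for $y\in V(G_S)$, and at each step I apply the first part to $G_S$, which has $\alpha(G_S)\ge2$ as long as $|S|<\alpha(G)-1$.

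The main obstacle is justifying Staples' deletion criterion itself, in case it is not taken as known. Its hard direction says that if all deletions of at most $p-1$ vertices are well-covered with the same $\alpha$, then disjoint independent sets $A_1,\dots,A_p$ extend simultaneously. I would prove it greedily: extend $A_1$ to a maximal independent set in $G-(A_2\cup\dots\cup A_p)$. This needs the sets $A_i$ to be small, so I would first reduce to $|A_i|\le1$ via localization at $A_1$ and induction. That reduction is where I expect the real work to lie.
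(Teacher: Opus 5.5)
Your route differs from the paper's. The paper gives no argument here; it imports the first assertion as Theorem~3.2 and the localization assertion as Lemma~2.9 of \cite{HoangLevitMandrescu2025}. You instead reduce everything to Staples' deletion criterion: $G\in\mathbf W_p$ iff $n(G)\ge p$ and $G-U$ is well-covered with $\alpha(G-U)=\alpha(G)$ whenever $|U|\le p-1$. The identity $(G-U)_x=G_x-(U\cap V(G_x))$ then carries this criterion between $G$ and $G_x$. Both directions go through, and so does the induction via $G_{S\cup\{y\}}=(G_S)_y$ for the localization part. The gain is that the localization theorem becomes a formal consequence of the older deletion theorem, so \Cref{thm:Wp-ridges} would rest on Staples rather than on \cite{HoangLevitMandrescu2025}. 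The cost is that both of your directions use the hard half of Staples' criterion.

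Two points need repair.

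\emph{The bound $n(G_x)\ge p$.} You leave it open, and deleting neighbours of $x$ cannot give it, because that says nothing about $|V(G_x)|$. Delete $U=V(G_x)$ instead. If $|V(G_x)|\le p-1$, then $G-U$ has vertex set $N_G[x]$, so $\{x\}$ is a maximal independent set of $G-U$. This contradicts $G-U$ being well-covered with $\alpha(G-U)=\alpha(G)\ge2$, and it is where the hypothesis $\alpha(G)\ge2$ is really used.

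\emph{Proving Staples' criterion yourself.} If you do not cite it, your greedy plan does not close. After choosing a maximum $M_1\supseteq A_1$, the other sets must be extended inside $G-M_1$. That is a deletion of $\alpha(G)$ vertices, about which the hypothesis says nothing. Localizing at $A_1$ also cuts into the other $A_j$. Augment one vertex at a time instead, by induction on $\sum_i(r-|A_i|)$ with $r=\alpha(G)$:
\begin{enumerate}
\item[(i)] If $|A_i|<r$, enlarge $A_i$ to an independent $S$ with $|S|=r-1$; this is possible since $G$ itself is well-covered.
\item[(ii)] Each $v\in F_G(S)$ lies outside $A_i$ and keeps $A_i\cup\{v\}$ independent.
\item[(iii)] $F_G(S)$ is a clique, so it meets each $A_j$ in at most one vertex.
\item[(iv)] Hence if $F_G(S)\subseteq\bigcup_{j\ne i}A_j$, then $|F_G(S)|\le p-1$. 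In that case $S$ is a maximal independent set of size $r-1$ in $G-F_G(S)$, contradicting the hypothesis; for $r=1$ it contradicts $n(G)\ge p$ directly.
\item[(v)] So some $v\in F_G(S)$ avoids every $A_j$ and can be added to $A_i$.
\end{enumerate}
The same argument proves (c)$\Rightarrow$(a) of \Cref{thm:Wp-ridges} directly, with no localization theorem at all.
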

 
 The first assertion is Theorem 3.2 of \cite{HoangLevitMandrescu2025}.  The localization assertion is Lemma 2.9 of \cite{HoangLevitMandrescu2025}, together with the standard well-covered localization identity of Plummer and Finbow--Hartnell--Nowakowski \cite{Plummer1993,FinbowHartnellNowakowski1993}.  For $p=2$, related forms were obtained in \cite{Pinter1995,LevitMandrescu2019}.
 
 \section{The \texorpdfstring{$\mathbf W_p$}{Wp} property as ridge thickness}
 
 The following theorem is the structural form of the $\mathbf W_p$ property used throughout the paper.
 
 \begin{theorem}[Ridge-thickness characterization of \texorpdfstring{$\mathbf W_p$}{Wp}]\label{thm:Wp-ridges}
 Let $p\ge 1$, let $G$ be a graph with $\alpha(G)=r\ge 1$, and let $\Delta=\Delta(G)$.  Then the following   are equivalent: 
 \begin{enumerate}
 \item[(a)]  $G\in\mathbf W_p$ 
 \item[(b)] $\Delta$ is pure and every ridge of $\Delta$ has degree at least $p$.
 \item[(c)]
  $G$   is well-covered and   $|F_G(S)|\ge p$
  for every  $S\in\Omega^*(G)$ with $|S|=r-1$.\end{enumerate} 
 \end{theorem}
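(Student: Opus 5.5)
The equivalence (b)$\Leftrightarrow$(c) is a translation. $\Delta(G)$ is pure of dimension $r-1$ exactly when $G$ is well-covered. The ridges of $\Delta$ are the independent sets $S$ with $|S|=r-1$, and $\deg_\Delta(S)=|F_G(S)|$ by the identity $F_G(S)=V(\lk_{\Delta(G)}(S))$ from the preliminaries. So the real content is (a)$\Leftrightarrow$(c), which I would prove by induction on $r$, using Theorem~\ref{thm:HLM-local} for both directions.

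\emph{Base case $r=1$.} Here $G$ is complete, so it is well-covered. The only ridge is $S=\emptyset$, and $F_G(\emptyset)=V(G)$. Disjoint independent sets are empty sets or singletons, so pairwise disjoint $A_1,\dots,A_p$ extend to pairwise disjoint singletons exactly when $n(G)\ge p$. Hence (a) and (c) both say $n(G)\ge p$.

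\emph{(a)$\Rightarrow$(c).} Since $\mathbf W_p\subseteq\mathbf W_1$, $G$ is well-covered. Fix $S$ with $|S|=r-1$. The localization part of Theorem~\ref{thm:HLM-local} gives $G_S\in\mathbf W_p$ with $\alpha(G_S)=1$. By the definition of $\mathbf W_p$ (which requires $n\ge p$), $|F_G(S)|=n(G_S)\ge p$. No induction is needed here.

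\emph{(c)$\Rightarrow$(a) for $r\ge2$.} By Theorem~\ref{thm:HLM-local}, it suffices to show that for every vertex $x$ we have $\alpha(G_x)=r-1$ and $G_x\in\mathbf W_p$.
\begin{itemize}
\item Since $G$ is well-covered, $\{x\}$ extends to a maximum independent set, and every maximal independent set of $G_x$, together with $x$, is maximal in $G$. Hence $G_x$ is well-covered with $\alpha(G_x)=r-1$. (For $r\ge2$, $G_x$ is nonempty.)
\item For an independent set $T$ of $G_x$ with $|T|=r-2$, put $S=T\cup\{x\}$. I claim $F_{G_x}(T)=F_G(S)$. Indeed, $y\in F_G(S)$ means $S\cup\{y\}$ is independent in $G$, which forces $y\notin N_G[x]$, hence $y\in V(G_x)$ and $T\cup\{y\}$ is independent of size $r-1=\alpha(G_x)$. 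The converse is symmetric.
\end{itemize}
So $G_x$ satisfies (c) with index $r-1$, and induction gives $G_x\in\mathbf W_p$. The hypothesis $n(G)\ge p$ in the definition of $\mathbf W_p$ holds because any fiber already has at least $p$ vertices.

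The main obstacle is only bookkeeping. One must check that the fibers of $G_x$ coincide with the fibers of $G$ over ridges containing $x$, and that the $n(G)\ge p$ clause and the $r=1$ convention in Theorem~\ref{thm:HLM-local} (which requires $\alpha\ge2$) are handled consistently. Both are dealt with by the base case and the observation above.
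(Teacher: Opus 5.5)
Your proposal is correct and follows essentially the same route as the paper: (b)$\Leftrightarrow$(c) by translation, (a)$\Rightarrow$(c) via the localization clause of Theorem~\ref{thm:HLM-local} applied to ridges, and the converse by induction on $r$ using the fiber identity $F_{G_x}(T)=F_G(T\cup\{x\})$ together with the vertex-localization criterion. Your extra checks (nonemptiness of $G_x$ and the $n(G)\ge p$ clause) are harmless bookkeeping that the paper leaves implicit.
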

 
 \begin{proof}
 (b) $\Longleftrightarrow$ (c)  is immediate from the definitions. 
 \vskip0.5em 
 (a) $\Longrightarrow$ (c):      Assume first that $G\in\mathbf W_p$.  Then $G\in\mathbf W_1$, so $G$ is well-covered and $\Delta(G)$ is pure.  Let $S$ be a ridge.  If $r=1$, then $S=\emptyset$, and $|F_G(S)|=|V(G)|\ge p$ by the definition of $\mathbf W_p$.  If $r\ge2$, then $G_S\in\mathbf W_p$ by \Cref{thm:HLM-local}, and $\alpha(G_S)=1$.  Therefore $G_S$ is a complete graph on at least $p$ vertices.  Since $V(G_S)=F_G(S)$, we get $|F_G(S)|\ge p$.
 
 \vskip0.5em 
 (b) $\Longrightarrow$ (a):   Assume that $\Delta(G)$ is pure of dimension $r-1$ and that every ridge has degree at least $p$.  We prove $G\in\mathbf W_p$ by induction on $r$.  If $r=1$, then $G$ is complete and $|V(G)|=\deg_{\Delta}(\emptyset)\ge p$, so $G\in\mathbf W_p$.
 
 Let $r\ge2$.  Fix $x\in V(G)$.  Since $\Delta(G)$ is pure, every maximal independent set of $G_x$ has size $r-1$. Indeed, if $I$ is a maximal independent set  in $G_x$, then $I\cup\{x\}$ is also  a maximal independent set  in $G$ and hence has size $r$.  Hence $G_x$ is well-covered and $\alpha(G_x)=r-1$.  Now let $T$ be a ridge of $\Delta(G_x)$, so $|T|=r-2$.  Then $T\cup\{x\}$ is a ridge of $\Delta(G)$, and
 \[
 F_{G_x}(T)=F_G(T\cup\{x\}).
 \]
 Hence every ridge of $\Delta(G_x)$ has degree at least $p$.  By the induction hypothesis, $G_x\in\mathbf W_p$ for every $x\in V(G)$.  Applying \Cref{thm:HLM-local}, we obtain $G\in\mathbf W_p$.
 \end{proof}
 
 Notice that \Cref{thm:Wp-ridges} does not assume \(\alpha\)-criticality.  It characterizes membership in \(\mathbf W_p\) for arbitrary graphs by purity and ridge thickness; in particular, for well-covered graphs, \(\mathbf W_p\)-membership is controlled entirely by the codimension-one fibers.
 
 This gives an exact numerical invariant.
 
 \begin{corollary}[Exact $\mathbf W$-index]\label{cor:exact-index}
 For a well-covered graph $G$ with $r=\alpha(G)$,
 \[
 \widx(G)=\min_{\substack{S\in\Omega^*(G)\\ |S|=r-1}} |F_G(S)|.
 \]
 Equivalently, if $H=\overline G$, then
 \[
 \widx(G)=\delta_{r-1}^{\cl}(H).
 \]
 \end{corollary}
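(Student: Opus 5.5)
The plan is to read the first formula directly off \Cref{thm:Wp-ridges}, and then to translate it to the complement through the dictionary between independent sets of $G$ and cliques of $H=\overline G$.

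Let $m$ denote the minimum of $|F_G(S)|$ over ridges $S$, that is, over $S\in\Omega^*(G)$ with $|S|=r-1$. Since $G$ is well-covered, criterion (c) of \Cref{thm:Wp-ridges} says that $G\in\mathbf W_p$ holds exactly when $p\le m$. Therefore $\{p\ge1:G\in\mathbf W_p\}=\{1,\dots,m\}$, and its maximum is $m$.

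Two small points need checking. First, the minimum is over a nonempty family: $G$ is non-empty, so $r\ge1$, and any maximum independent set contains a subset of size $r-1$. Second, $m\ge1$, so that the set $\{1,\dots,m\}$ is nonempty and $\widx(G)$ is well defined. This holds because each ridge $S$ lies in some maximum independent set, so $F_G(S)\neq\emptyset$. Equivalently, $G\in\mathbf W_1$ because $G$ is well-covered.

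For the complement form, note that $S\subseteq V(G)$ is independent in $G$ exactly when $S$ is a clique in $H$. Hence $\omega(H)=\alpha(G)=r$, and the independent $(r-1)$-sets of $G$ are exactly the members of $\mathcal K_{r-1}(H)$. For such a set $S$, a vertex $x\notin S$ lies in $F_G(S)$ precisely when $S\cup\{x\}\in\mathcal K_r(H)$. So $|F_G(S)|=\codeg_H(S)$, and taking minima gives $\widx(G)=\delta_{r-1}^{\cl}(H)$.

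The case $r=1$ has to be matched against the stated convention. Here the only ridge is $S=\emptyset$, and $F_G(\emptyset)=V(G)$, so $\widx(G)=|V(G)|=\delta_0^{\cl}(H)$. This agrees with the convention $\mathcal K_0(H)=\{\emptyset\}$ and $\delta_0^{\cl}(H)=|V(H)|$.

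No step here is a real obstacle. The only thing needing care is this bookkeeping at the boundary, namely nonemptiness of the family of ridges and the $r=1$ convention.
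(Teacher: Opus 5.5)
Your proposal is correct and follows the paper's proof: the paper also reads the first formula directly off criterion (c) of \Cref{thm:Wp-ridges} and gets the second from the correspondence between independent sets of $G$ and cliques of $H=\overline G$. Your extra checks (a ridge exists, $m\ge1$ by well-coveredness, and the $r=1$ convention) are valid details that the paper leaves implicit.
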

 
 \begin{proof}
 The first formula is immediate from \Cref{thm:Wp-ridges}.  The second follows because independent sets of $G$ are cliques of $H$, and $F_G(S)$ is exactly the set of vertices extending the corresponding $(r-1)$-clique to an $r$-clique of $H$.
 \end{proof}
 
 \section{\texorpdfstring{$\alpha$}{Alpha}-criticality as generation by ridge links}
 
 Let $G$ be a graph with $\alpha(G)=r$.  For an independent set $S$ of size $r-1$, the fiber $F_G(S)$ is always a clique of $G$: if $x,y\in F_G(S)$ were non-adjacent, then $S\cup\{x,y\}$ would be independent of size $r+1$.
 
 The usual localization test for an edge $xy$ to be critical is $\alpha(G_{xy})=\alpha(G)-1$; variants of this criterion appear in the literature on $\alpha$-critical graphs and edge ideals \cite{HoangLevitMandrescu2025,JaramilloVillarreal2021}.  The next proposition rewrites the same phenomenon in ridge-fiber language.
 
 \begin{proposition}[Fiber cover criterion for \texorpdfstring{$\alpha$}{alpha}-criticality]\label{prop:fiber-critical}
 Let $G$ be a graph with $\alpha(G)=r$.  Then $G$ is $\alpha$-critical if and only if
 \[
 E(G)=\bigcup_{\substack{S\in\Omega^*(G)\\ |S|=r-1}} \binom{F_G(S)}{2}.
 \]
 \end{proposition}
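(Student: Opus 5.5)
The argument will be a direct, edge-by-edge verification. For a single edge $xy$ I will prove the equivalence
\[
\alpha(G-xy)=r+1 \iff \text{there is } S\in\Omega^*(G),\ |S|=r-1, \text{ with } x,y\in F_G(S).
\]
Taking this over all edges gives the proposition. The union on the right of the displayed identity always consists of edges of $G$, because each $F_G(S)$ is a clique, as noted just before the statement. So the set equality is exactly the statement that every edge lies in some $\binom{F_G(S)}{2}$.

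For the direction $(\Leftarrow)$, suppose $x,y\in F_G(S)$ with $|S|=r-1$ independent. Then $S\cup\{x\}$ and $S\cup\{y\}$ are both independent in $G$. Hence no vertex of $S$ is adjacent to $x$ or to $y$, and $x,y\notin S$ by definition of the fiber. In $G-xy$ the only edge removed is $xy$, so $S\cup\{x,y\}$ is an independent set of size $r+1$. Thus $\alpha(G-xy)>r$, and $xy$ is $\alpha$-critical.

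For the direction $(\Rightarrow)$, suppose $\alpha(G-xy)=r+1$ and let $I$ be a maximum independent set of $G-xy$. Then $I$ must contain both $x$ and $y$: otherwise $I$ would avoid the edge $xy$, would be independent in $G$, and would have size $r+1>\alpha(G)$. Put $S=I\setminus\{x,y\}$, so that $|S|=r-1$ and $S$ is independent in $G$. Moreover $S\cup\{x\}$ and $S\cup\{y\}$ contain no edge of $G$, since the only edge of $G$ that could lie in $I$ is $xy$. They have size $r$, so both belong to $\Omega(G)$, which means $x,y\in F_G(S)$.

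I do not expect a genuine obstacle; the only care needed is with degenerate cases. If $r=1$, then $G$ is complete, $S=\emptyset$, and $F_G(\emptyset)=V(G)$; the identity then says $E(G)=\binom{V(G)}{2}$, which holds, and every edge of a complete graph is indeed critical, consistent with the argument above. If $G$ has no edges, both sides are vacuous. No results beyond the definitions and the clique property of fibers are required.
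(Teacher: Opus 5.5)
Your proposal is correct and follows essentially the same route as the paper's proof. Like the paper, you note that the union lies in $E(G)$ because fibers are cliques, and then you prove criticality of each edge $xy$ in both directions. One direction extracts $S=I\setminus\{x,y\}$ from an independent $(r+1)$-set of $G-xy$, which must contain both $x$ and $y$; the other uses $S\cup\{x,y\}$ as the witness.
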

 
 \begin{proof} For every $S\in \Omega^*(G)$ with  $|S|=r-1$, the set  $F_G(S)$ is a clique. Hence the right-hand side is contained in $E(G)$.
 
 Let $xy\in E(G)$ be arbitrary.  If $xy$ is critical, then $G-xy$ has an independent set $M$ of size $r+1$.  The only new independent sets created by deleting $xy$ are those containing both $x$ and $y$, so $x,y\in M$.  Put $S=M\setminus\{x,y\}$.  Then $S$ is independent in $G$, $|S|=r-1$, and both $S\cup\{x\}$ and $S\cup\{y\}$ are maximum independent sets of $G$.  Thus $x,y\in F_G(S)$.
 
 Conversely, if $x,y\in F_G(S)$ for  some  independent set  $S$ of size $r-1$, then $xy\in E(G)$ and the set $S\cup\{x,y\}$ is independent in $G-xy$ of size $r+1$.  Hence $xy$ is critical.
 \end{proof}
 
 In the independence complex $\Delta(G)$, the pair $\{x,y\}$ is a missing edge precisely when $xy\in E(G)$. Therefore Proposition~\ref{prop:fiber-critical} says that every missing edge of the flag complex $\Delta(G)$ is contained in the vertex set of the link of some ridge.
 
 \section{The saturation characterization}
 
 We now prove the full characterization.  The result is stated in graph language, independence-complex language, and complement language.
 
 \begin{theorem}\label{thm:main}
 Let $p\ge1$, let $G$ be a graph, let $r=\alpha(G)$, let $\Delta=\Delta(G)$, and let $H=\overline G$.  The following conditions are equivalent.
 \begin{enumerate}
 \item[(a)] $G$ is $\alpha$-critical and $G\in\mathbf W_p$.
 \item[(b)] $\Delta$ is pure, every ridge of $\Delta$ has degree at least $p$, and every missing edge of $\Delta$ is contained in the vertex set of the link of some ridge.
 \item[(c)] $G$ is well-covered, $|F_G(S)|\ge p$ for every $S\in\Omega^*(G)$ with $|S|=r-1$, and
 \[
 E(G)=\bigcup_{\substack{S\in\Omega^*(G)\\ |S|=r-1}} \binom{F_G(S)}{2}.
 \]
 \item[(d)] $H$ is $K_{r+1}$-saturated, every maximal clique of $H$ has size $r$, and
 \[
 \delta_{r-1}^{\cl}(H)\ge p.
 \]
 \end{enumerate}
 \end{theorem}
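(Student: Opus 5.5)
The plan is to combine the two results already established and then translate into the complement. For (a)$\Leftrightarrow$(c), we apply \Cref{thm:Wp-ridges} to get that $G\in\mathbf W_p$ is equivalent to well-coveredness together with $|F_G(S)|\ge p$ for all $S\in\Omega^*(G)$ with $|S|=r-1$. Independently, \Cref{prop:fiber-critical} states that $\alpha$-criticality is equivalent to the edge-cover identity. The conjunction of these two equivalences is exactly (a)$\Leftrightarrow$(c).

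For (b)$\Leftrightarrow$(c), we use the dictionary from the preliminaries. Purity of $\Delta$ means well-coveredness. Ridges are the independent sets of size $r-1$, and the ridge degree is $|F_G(S)|$. Missing edges of $\Delta$ are exactly the edges of $G$, and the vertex set of the link of a ridge $S$ is $F_G(S)$. Since each $F_G(S)$ is a clique of $G$, the inclusion of the right-hand side in $E(G)$ is automatic. So "every missing edge lies in $V(\lk_\Delta(S))$ for some ridge $S$" is precisely the reverse inclusion $E(G)\subseteq\bigcup\binom{F_G(S)}{2}$.

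For (c)$\Leftrightarrow$(d), we pass to $H=\overline G$, where independent sets of $G$ are cliques of $H$ and $\omega(H)=r$.

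\begin{itemize}
\item \emph{Purity.} $G$ is well-covered iff every maximal clique of $H$ has size $r$.
\item \emph{Codegree.} For $Q\in\mathcal K_{r-1}(H)$ we have $\codeg_H(Q)=|F_G(Q)|$, as in \Cref{cor:exact-index}, so the fiber bound is $\delta^{\cl}_{r-1}(H)\ge p$. For $r=1$ the convention $\delta_0^{\cl}(H)=|V(H)|$ matches $F_G(\emptyset)=V(G)$.
\item \emph{Edge cover versus saturation.} $H$ is $K_{r+1}$-free since $\omega(H)=r$. For a non-edge $xy$ of $H$, i.e.\ an edge of $G$, adding $xy$ creates a $K_{r+1}$ iff there is an $(r-1)$-set $S$ with $S\cup\{x\}$ and $S\cup\{y\}$ both $r$-cliques of $H$. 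Equivalently, $S$ is independent in $G$ of size $r-1$ with $x,y\in F_G(S)$. Hence $K_{r+1}$-saturation of $H$ is exactly the edge-cover identity.
\end{itemize}

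This last step is essentially a restatement of the argument in \Cref{prop:fiber-critical}. The degenerate case $r=1$ (so $G$ is complete and $H$ is edgeless) should be checked separately, but it is consistent: every pair lies in $F_G(\emptyset)=V(G)$.

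I expect no step to be a genuine obstacle. The only care needed is in the $r=1$ conventions, and in making sure the saturation equivalence uses $\omega(H)=r$ exactly rather than merely $\le r$. That holds because $\alpha(G)=r$ is given.
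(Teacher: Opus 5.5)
Your proposal is correct and follows essentially the same route as the paper. Conditions (a), (b) and (c) are tied together by combining \Cref{thm:Wp-ridges} with Proposition~\ref{prop:fiber-critical}, and (c)$\Leftrightarrow$(d) is the independent-set/clique dictionary in $H=\overline G$, where $K_{r+1}$-saturation is read off from the fiber description of critical edges together with $\omega(H)=r$; you only make the (b)$\Leftrightarrow$(c) translation and the $r=1$ check more explicit than the paper does.
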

 
 \begin{proof}
 The equivalence of (a), (b), and (c) follows by combining \Cref{thm:Wp-ridges} with Proposition~\ref{prop:fiber-critical}.
 
 It remains to compare (c) and (d).  The case $r=1$ is covered by the convention $\mathcal K_0(H)=\{\emptyset\}$: then $G$ is complete, $H$ is edgeless, and the stated conditions reduce to $|V(G)|\ge p$.  Hence we may also read the following argument uniformly for all $r\ge1$.

Since independent sets of $G$ are cliques of $H$, the graph $G$ is well-covered with $\alpha(G)=r$ if and only if every maximal clique of $H$ has size $r$. This  implies that   $H$ is $K_{r+1}$-free.   Moreover, for a clique $Q\in\mathcal K_{r-1}(H)$ corresponding to an independent set $S$  of $G$ with $|S|=r-1$, the fiber $F_G(S)$ is the set of vertices extending $Q$ to a copy of $K_r$ in $H$.  Thus the condition $|F_G(S)|\ge p$ for all $S$ is equivalent to $\delta_{r-1}^{\cl}(H)\ge p$.
 
 Finally, an edge $xy$ of $G$ is a non-edge of $H$.  By Proposition~\ref{prop:fiber-critical}, $xy$ is an $\alpha$-critical edge of $G$ if and only if there exists an $(r-1)$-clique $Q$ of $H$ such that $Q\cup\{x\}$ and $Q\cup\{y\}$ are both $r$-cliques of $H$.  This is equivalent to saying that adding the missing edge $xy$ to $H$ creates a $K_{r+1}$ on $Q\cup\{x,y\}$.  Since $\omega(H)=r$, this is precisely $K_{r+1}$-saturation.
 \end{proof}
 
 \begin{corollary}[Complement correspondence]\label{cor:complement-correspondence}
 Fix integers $p\ge1$ and $r\ge1$.  The complement map $G\mapsto\overline G$ gives a bijection between graphs $G$ with $\alpha(G)=r$ which are $\alpha$-critical and belong to $\mathbf W_p$, and graphs $H$ satisfying the following three conditions:
 \begin{enumerate}[(a)]
 \item $H$ is $K_{r+1}$-saturated;
 \item every maximal clique of $H$ has size $r$;
 \item $\delta_{r-1}^{\cl}(H)\ge p$.
 \end{enumerate}
 Under this correspondence, a ridge $S$ of $\Delta(G)$ is an $(r-1)$-clique $Q$ of $H$, the fiber $F_G(S)$ is precisely the set of vertices extending $Q$ to an $r$-clique of $H$, and $\widx(G)=\delta_{r-1}^{\cl}(H)$.
 \end{corollary}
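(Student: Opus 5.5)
The corollary is essentially a repackaging of \Cref{thm:main}, so the plan is to read it off from the equivalence (a)$\Leftrightarrow$(d) and then check bijectivity and the dictionary statements.

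First, fix $p\ge1$ and $r\ge1$. Complementation $G\mapsto\overline G$ is an involution on graphs with a fixed vertex set, so it is a bijection from all graphs to all graphs. It therefore suffices to show that it maps the first class exactly onto the second.

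Take $G$ with $\alpha(G)=r$ which is $\alpha$-critical and in $\mathbf W_p$. By \Cref{thm:main} (a)$\Rightarrow$(d), $H=\overline G$ satisfies (a)--(c).

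Conversely, let $H$ satisfy (a)--(c) and put $G=\overline H$. The one point needing care is that $\alpha(G)=r$, since \Cref{thm:main} is stated with $r:=\alpha(G)$. Condition (b) says every maximal clique of $H$ has size $r$. As $H$ is nonempty, $H$ has a maximal clique, so $\omega(H)=r$ and hence $\alpha(G)=\omega(H)=r$. With $r=\alpha(G)$ now legitimately fixed, \Cref{thm:main} (d)$\Rightarrow$(a) gives that $G$ is $\alpha$-critical and $G\in\mathbf W_p$. For $r=1$ the convention $\mathcal K_0(H)=\{\emptyset\}$, $\delta^{\cl}_0(H)=|V(H)|$ is used exactly as in the proof of \Cref{thm:main}.

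Next come the dictionary statements. The faces of $\Delta(G)$ are the independent sets of $G$, which are exactly the cliques of $H$. Hence a ridge $S$, an independent set of size $r-1$, is precisely an $(r-1)$-clique $Q=S$ of $H$. Likewise $x\in F_G(S)$ iff $S\cup\{x\}\in\Omega(G)$ iff $Q\cup\{x\}\in\mathcal K_r(H)$, so $F_G(S)$ is the set of vertices extending $Q$ to an $r$-clique and $|F_G(S)|=\codeg_H(Q)$. Finally, $G$ is well-covered, so \Cref{cor:exact-index} applies directly and gives $\widx(G)=\delta^{\cl}_{r-1}(H)$.

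The only mild obstacle is the converse direction: we must recover $\alpha(\overline H)=r$ from condition (b) before invoking \Cref{thm:main}. Everything else is a direct translation.
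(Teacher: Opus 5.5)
Your proposal is correct and follows the paper's proof: the bijection comes from \Cref{thm:main} (a)$\Leftrightarrow$(d), the fiber description comes from translating independent sets of $G$ into cliques of $H$, and the index formula is \Cref{cor:exact-index}. Your explicit check that condition (b) forces $\omega(H)=r$, and hence $\alpha(\overline H)=r$, is something the paper leaves implicit; it is exactly what justifies applying \Cref{thm:main} in the converse direction.
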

 
 \begin{proof}
 The bijection is \Cref{thm:main}, stated with $H=\overline G$.  The description of fibers is the identity
 \[
 F_G(S)=\{x\in V(H)\setminus S:S\cup\{x\}\in\mathcal K_r(H)\},
 \]
 for the corresponding clique $S$ of $H$.  The formula for $\widx(G)$ is Corollary~\ref{cor:exact-index}.
 \end{proof}
 
 \begin{figure}[H]
 \centering
 \begin{tikzpicture}[scale=0.9, every node/.style={circle, draw, fill=white, inner sep=1.5pt, minimum size=6pt}]
 \node (q1) at (-1.1,1.6) {};
 \node (q2) at (1.1,1.6) {};
 \node (f1) at (-2.0,0) {};
 \node (f2) at (0,0) {};
 \node (f3) at (2.0,0) {};
 \draw (q1)--(q2);
 \foreach \q in {q1,q2}{
  \foreach \f in {f1,f2,f3}{\draw (\q)--(\f);}
 }
 \draw[dashed] (f1)--(f2);
 \draw[dashed] (f2)--(f3);
 \draw[dashed] (f1) to[bend right=18] (f3);
 \node[draw=none, fill=none, rectangle] at (0,2.15) {$Q\in\mathcal K_{r-1}(H)$};
 \node[draw=none, fill=none, rectangle] at (0,-0.55) {$\codeg_H(Q)=3$};
 \end{tikzpicture}
 \caption{A ridge in the complement for the case $r=3$.  The solid upper edge is a $K_{r-1}$, the three lower vertices are its extensions to $K_r$'s, and the dashed lower pairs are missing edges of $H$.  In $G=\overline H$, those dashed pairs become edges inside the fiber clique $F_G(S)$.}
 \label{fig:ridge-fiber}
 \end{figure}

 \section{Consequences and examples} 
 \subsection{Small independence number}
 A graph $G$ is called \emph{maximal triangle-free} if $G$ contains no triangle and, for every non-edge  $e$ of $G$, the graph $G+e$ contains a triangle. 
 For $r=2$, the complement formulation becomes especially sharp.
 
 \begin{corollary}[The case \texorpdfstring{$\alpha(G)=2$}{alpha(G)=2}]\label{cor:alpha2}
 Let $G$ be a graph with $\alpha(G)=2$, and set $H=\overline G$.  Then
 \[
 G\text{ is }\alpha\text{-critical and }G\in\mathbf W_p
 \]
 if and only if $H$ is maximal triangle-free and $\delta(H)\ge p$.  In this case,
 \[
 \widx(G)=\delta(H).
 \]
 \end{corollary}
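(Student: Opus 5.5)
The plan is to specialize \Cref{thm:main} to $r=2$, using the equivalence (a)$\Leftrightarrow$(d), and to translate each of the three conditions in (d) into the language of $H=\overline G$. For $r=2$, condition (d) says: $H$ is $K_3$-saturated, every maximal clique of $H$ has size $2$, and $\delta_1^{\cl}(H)\ge p$. The first condition is, by definition, that $H$ is maximal triangle-free. Since $\mathcal K_1(H)$ is the set of singletons $\{v\}$, and $\codeg_H(\{v\})$ counts the vertices $x$ with $\{v,x\}\in\mathcal K_2(H)$, we get $\codeg_H(\{v\})=\deg_H(v)$. Hence $\delta_1^{\cl}(H)=\delta(H)$, and the third condition becomes $\delta(H)\ge p$.

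What remains is to show that the middle condition, that every maximal clique has size exactly $2$, adds nothing once the other two hold.

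\emph{Forward direction.} Here we only discard a condition, so nothing needs to be checked.

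\emph{Converse.} Assume $H$ is maximal triangle-free with $\delta(H)\ge p\ge1$. Then $H$ has no isolated vertices, so every singleton lies in an edge. Since $H$ is triangle-free, no edge lies in a larger clique, so every maximal clique of $H$ is an edge and has size $2$. We must also check $\omega(H)=2=r$, which holds because $H$ has at least one edge; this is consistent with the hypothesis $\alpha(G)=2$. With all three conditions of (d) verified, \Cref{thm:main} gives (a).

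This isolated-vertex point is the only subtle step. Without $\delta(H)\ge1$, an isolated vertex of $H$ would be a maximal clique of size $1$, and $G$ would fail to be well-covered. The hypothesis $p\ge1$ rules this out.

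Finally, the formula $\widx(G)=\delta(H)$ follows from \Cref{cor:exact-index}. In this situation $G$ is well-covered, so $\widx(G)=\delta_{1}^{\cl}(H)$, and we showed above that $\delta_1^{\cl}(H)=\delta(H)$.
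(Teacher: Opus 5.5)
Your proof is correct and is the paper's argument: specialize \Cref{thm:main}, (a)$\Leftrightarrow$(d), to $r=2$, identify $K_3$-saturation with maximal triangle-freeness and $\delta_1^{\cl}(H)$ with $\delta(H)$, and read off $\widx(G)$ from \Cref{cor:exact-index}; you additionally spell out why the maximal-clique condition in (d) is redundant, which the paper leaves implicit. Incidentally, that step does not need $p\ge1$: a maximal triangle-free graph on at least two vertices has no isolated vertex, because adding an edge at an isolated vertex cannot create a triangle.
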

 
 \begin{proof}
 Apply Theorem \ref{thm:main}(d) with   $r=2$. Observe that   a $K_{3}$-saturated graph is precisely  a maximal triangle-free graph, while  the $1$-clique-codegree coincides  with  ordinary vertex degree.
 \end{proof}
 
 For $r=3$, the condition is an edge-triangle condition.
 
 \begin{corollary}[The case \texorpdfstring{$\alpha(G)=3$}{alpha(G)=3}]\label{cor:alpha3}
 Let $G$ be a graph with $\alpha(G)=3$, and set $H=\overline G$.  Then $G$ is $\alpha$-critical and belongs to $\mathbf W_p$ if and only if the following three conditions hold: 
 \begin{enumerate}
 \item[(a)] $H$ is $K_4$-saturated;
 \item[(b)] every maximal clique of $H$ is a triangle;
 \item[(c)] every edge of $H$ is contained in at least $p$ triangles.
 \end{enumerate}
 Moreover, $\widx(G)$ is the minimum number of triangles containing an edge of $H$.
 \end{corollary}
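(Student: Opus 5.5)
This is a direct specialization of \Cref{thm:main}(d) to $r=3$, exactly parallel to the proof of Corollary~\ref{cor:alpha2}. First, apply the equivalence (a)$\Leftrightarrow$(d) of \Cref{thm:main} with $r=\alpha(G)=3$ and $H=\overline G$. It says that $G$ is $\alpha$-critical and in $\mathbf W_p$ if and only if three things hold: $H$ is $K_4$-saturated, every maximal clique of $H$ has size $3$, and $\delta_{2}^{\cl}(H)\ge p$.

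Next, translate each of the three conditions.
\begin{itemize}
\item $K_{r+1}$-saturation with $r=3$ is literally condition (a).
\item ``Every maximal clique has size $3$'' is condition (b), since a clique of size three is a triangle.
\item For the third condition, note that $\mathcal K_2(H)=E(H)$. For an edge $Q=\{u,v\}$, $\codeg_H(Q)$ counts vertices $x$ with $\{u,v,x\}\in\mathcal K_3(H)$, which is exactly the number of triangles of $H$ containing $uv$. So $\delta_2^{\cl}(H)\ge p$ is condition (c).
\end{itemize}

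One degenerate point must be checked: condition (c) should not be vacuous in a way that breaks the equivalence. If $H$ had no edges, the maximal cliques would be single vertices, contradicting (b). So under (b) we get $\mathcal K_2(H)\neq\emptyset$, and the minimum in $\delta_2^{\cl}(H)$ is taken over a nonempty family.

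For the final assertion, apply Corollary~\ref{cor:exact-index}, or equivalently Corollary~\ref{cor:complement-correspondence}. In this situation $G$ is well-covered, so $\widx(G)=\delta_2^{\cl}(H)$, which by the identification above is the minimum over edges of $H$ of the number of triangles containing that edge.

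There is no real obstacle here. The only care needed is the edge/triangle identification of the $2$-clique-codegree, and the observation that $\alpha(G)=3$ fixes $\omega(H)=3$, so the ambient value of $r$ in \Cref{thm:main} is the right one.
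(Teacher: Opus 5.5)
Your proposal is correct and matches the paper's approach: the paper gives no separate proof, and the corollary is the $r=3$ specialization of \Cref{thm:main}(d), obtained exactly as Corollary~\ref{cor:alpha2} is for $r=2$. You identify $\delta_2^{\cl}(H)$ with the minimum number of triangles containing an edge of $H$, and you take the index formula from Corollary~\ref{cor:exact-index}, which in fact holds for every well-covered $G$ with $\alpha(G)=3$, so $\alpha$-criticality is not needed for that part.
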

 
 \subsection{Dense complement rigidity}
 
 The saturation characterization lets us import extremal structure theorems.  The following uses the theorem of Andr\'{a}sfai, Erd\H{o}s and S\'{o}s: every $n$-vertex $K_{r+1}$-free graph with minimum degree greater than $\frac{3r-4}{3r-1}n$ is $r$-partite \cite{AndrasfaiErdosSos1974}.
 
 \begin{corollary}[Dense complement rigidity]\label{cor:dense}
 Let $G$ be an $n$-vertex $\alpha$-critical graph in $\mathbf W_p$, and put $r=\alpha(G)\ge2$ and $H=\overline G$.  If
 \[
 \delta(H)>\frac{3r-4}{3r-1}n,
 \]
 then $H$ is complete $r$-partite.  Equivalently, $G$ is the disjoint union of $r$ complete graphs.  The $r$ clique sizes of $G$ are all at least $p$.
 \end{corollary}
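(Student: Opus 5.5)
By \Cref{thm:main}, the hypotheses on $G$ translate into hypotheses on $H=\overline G$. $H$ is $K_{r+1}$-saturated, in particular $K_{r+1}$-free. Every maximal clique of $H$ has size $r$. Finally, $\delta_{r-1}^{\cl}(H)\ge p$. The plan is to apply the Andr\'asfai--Erd\H{o}s--S\'os theorem to the $K_{r+1}$-free graph $H$ and then use saturation to upgrade ``$r$-partite'' to ``complete $r$-partite.''

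First, since $\delta(H)>\frac{3r-4}{3r-1}n$ and $H$ is $K_{r+1}$-free, the cited theorem gives a partition $V(H)=V_1\cup\cdots\cup V_r$ into independent sets of $H$. Next I show that $H$ is complete $r$-partite. Suppose $x\in V_i$ and $y\in V_j$ with $i\ne j$ are non-adjacent in $H$. By $K_{r+1}$-saturation, $H+xy$ contains a $K_{r+1}$, and this clique must use the edge $xy$. But $H+xy$ is still $r$-partite with the same parts, because $x$ and $y$ lie in different parts. An $r$-partite graph contains no $K_{r+1}$, since two of its $r+1$ vertices would share a part. This contradiction shows every cross pair is an edge, so $H$ is complete $r$-partite.

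It remains to check that every part is non-empty and to bound the part sizes. If some part were empty, $H$ would be complete $r'$-partite with $r'<r$, and then $\omega(H)\le r'<r$. This contradicts the fact that maximal cliques have size $r$; note that $r\ge 2$ and $\omega(H)=\alpha(G)=r$. Hence $G=\overline H$ is the disjoint union of the complete graphs $G[V_1],\dots,G[V_r]$. For the bound, fix $i$ and pick one vertex from each part $V_j$ with $j\ne i$. These vertices form an $(r-1)$-clique $Q$ of $H$, and its extensions to $r$-cliques are exactly the vertices of $V_i$. So $\codeg_H(Q)=|V_i|$, and the hypothesis $\delta_{r-1}^{\cl}(H)\ge p$ gives $|V_i|\ge p$.

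The main step is upgrading partite to complete partite via saturation. It is straightforward once one notes that adding a cross edge preserves the partition. The rest is bookkeeping: ensuring no part is empty, and translating codegree into part size.
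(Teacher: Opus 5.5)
Your proposal is correct and follows essentially the same route as the paper: apply the Andr\'asfai--Erd\H{o}s--S\'os theorem to the $K_{r+1}$-free graph $H$, use $K_{r+1}$-saturation to show every cross-part pair is an edge, and read off $\codeg_H(Q)=|V_i|$ to get part sizes at least $p$. Your extra check that no part is empty (via $\omega(H)=r$) makes explicit a step the paper leaves implicit when it picks one vertex from each of the other parts.
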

 
 \begin{proof}
 By \Cref{thm:main}, $H$ is $K_{r+1}$-free.  The Andr\'{a}sfai--Erd\H{o}s--S\'{o}s theorem implies that $H$ is $r$-partite; write $V(H)=P_1\cup\cdots\cup P_r$ for an $r$-partition.  If $u\in P_i$ and $v\in P_j$ with $i\ne j$ were non-adjacent, then adding the edge $uv$ would preserve $r$-partiteness and hence could not create a $K_{r+1}$.  This contradicts $K_{r+1}$-saturation.  Hence all cross-part pairs are edges, so $H$ is complete $r$-partite.  An $(r-1)$-clique of $H$ choosing one vertex from all parts except $P_i$ has clique-codegree $|P_i|$.  Therefore the condition $\delta_{r-1}^{\cl}(H)\ge p$ says exactly that each part has size at least $p$.  Passing to complements gives the assertion about $G$.
 \end{proof}

\subsection{Edge-count and order bounds}

The complement correspondence also gives numerical restrictions.  One comes from the classical complete-graph saturation bound of Erd\H{o}s, Hajnal and Moon; the other comes from the additional $p$-fold ridge-thickness condition.

\begin{corollary}[A $p$-sensitive saturation bound]\label{cor:edge-bound}
Let $G$ be an $n$-vertex $\alpha$-critical graph in $\mathbf W_p$, and put $r=\alpha(G)\ge2$ and $H=\overline G$.  Then
\[
       e(H)\ge
       \max\left\{(r-1)n-\binom{r}{2},\, \left\lceil\frac{np(r-1)}{2}\right\rceil\right\}.
\]
Equivalently,
\[
       e(G)\le \binom{n}{2}-
       \max\left\{(r-1)n-\binom{r}{2},\, \left\lceil\frac{np(r-1)}{2}\right\rceil\right\}.
\]
Moreover,
\[
       \delta(H)\ge p(r-1).
\]
If $p\ge2$, then $H$ has no conical vertex, i.e., no vertex adjacent to every other vertex.  If
\[
       n<\frac{(3r-1)(r-1)}{3r-4}\,p,
\]
then $H$ is complete $r$-partite.  Equivalently, $G$ is the disjoint union of $r$ complete graphs, each of order at least $p$.  In particular, if $n=rp$, then
\[
       G\cong \underbrace{K_p\cup\cdots\cup K_p}_{r\text{ copies}}.
\]
\end{corollary}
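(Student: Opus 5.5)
Everything will be read off the complement formulation in \Cref{thm:main}(d): $H$ is $K_{r+1}$-saturated, every maximal clique of $H$ has size $r$, and $\delta_{r-1}^{\cl}(H)\ge p$.

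The first lower bound $e(H)\ge (r-1)n-\binom r2$ is the Erd\H{o}s--Hajnal--Moon theorem for $K_{r+1}$-saturated graphs on $n$ vertices. It applies whenever $n\ge r+1$. If $n=r$, then $G$ is edgeless and $H=K_r$, and equality holds directly, so this case needs only a separate one-line check.

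The second bound comes from the degree estimate $\delta(H)\ge p(r-1)$, which I prove first.
\begin{itemize}
\item Fix $v\in V(H)$. Since every maximal clique of $H$ has size $r$, $v$ lies in some $r$-clique $K$.
\item Put $Q=K\setminus\{u\}$ for some $u\in K\setminus\{v\}$; this uses $r\ge2$. Then $Q\ni v$ and $|Q|=r-1$.
\item The codegree condition gives at least $p$ vertices $x\notin Q$ with $Q\cup\{x\}$ a clique. Each such $x$ is a neighbour of $v$.
\item To get $p(r-1)$ rather than $p$, look inside the link of $v$. The link $H[N_H(v)]$ is a graph in which every maximal clique has size $r-1$: a maximal clique $C$ of the link gives the maximal clique $C\cup\{v\}$ of $H$.
\item For an $(r-2)$-clique $R$ of the link, $R\cup\{v\}$ is an $(r-1)$-clique of $H$. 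Every extension of $R\cup\{v\}$ lies in $N_H(v)$, so the link has $\delta_{r-2}^{\cl}\ge p$.
\item By induction on $r$, the link has at least $p(r-1)$ vertices. The induction statement is: a graph with all maximal cliques of size $s\ge1$ and $\delta_{s-1}^{\cl}\ge p$ has at least $ps$ vertices. The base $s=1$ is the convention $\delta_0^{\cl}=|V|$. For the inductive step, take any vertex $w$; its link has all maximal cliques of size $s-1$ and inherits the codegree bound, so it has at least $p(s-1)$ vertices. Then take an $(s-1)$-clique containing $w$ and omitting one other vertex: its at least $p$ extensions are distinct from $w$. I need them disjoint from the $p(s-1)$ link vertices, but they are not. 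So this step needs a different count.
\end{itemize}

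The count I will use instead proves the following. In a graph whose maximal cliques all have size $s$ and with $\delta_{s-1}^{\cl}\ge p$, every vertex has degree at least $p(s-1)$, and $n\ge ps$.

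The argument is a greedy construction of disjoint maximum cliques, which is the $\mathbf W_p$ property itself. By \Cref{thm:main} and \Cref{thm:Wp-ridges}, $G\in\mathbf W_p$ with $\alpha(G)=r$.
\begin{itemize}
\item For the order bound, apply the $\mathbf W_p$ definition to $p$ disjoint empty sets. This gives $p$ pairwise disjoint maximum independent sets of $G$, so $n\ge pr$.
\item For the degree bound, apply \Cref{thm:HLM-local} to get $G_v\in\mathbf W_p$ with $\alpha(G_v)=r-1$. Hence $|V(G_v)|\ge p(r-1)$. But $V(G_v)=V\setminus N_G[v]=N_H(v)$, so $\deg_H(v)\ge p(r-1)$.
\end{itemize}
This neatly replaces the failed induction. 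Summing degrees then gives $e(H)\ge \lceil np(r-1)/2\rceil$. Complementing yields the bound on $e(G)$.

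For $p\ge2$, a conical vertex $v$ of $H$ is isolated in $G$. Then $v$ lies in every maximum independent set of $G$, which contradicts the existence of two disjoint maximum independent sets. (If $n\ge1$ this is immediate.)

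For the last part, use $\delta(H)\ge p(r-1)$. The hypothesis $n<\frac{(3r-1)(r-1)}{3r-4}p$ is equivalent to
\[
p(r-1)>\frac{3r-4}{3r-1}\,n,
\]
so \Cref{cor:dense} applies and gives that $H$ is complete $r$-partite with parts of size at least $p$. If $n=rp$, the inequality holds because $r<\frac{(3r-1)(r-1)}{3r-4}$. This is equivalent to $r(3r-4)<(3r-1)(r-1)$, i.e. $3r^2-4r<3r^2-4r+1$, which is true. So all $r$ parts have size exactly $p$, and $G\cong rK_p$.

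The main obstacle was the degree bound $\delta(H)\ge p(r-1)$. Pure codegree counting fails, and the resolution is to use the localization $G_v\in\mathbf W_p$ from \Cref{thm:HLM-local} together with the disjoint-extension property.
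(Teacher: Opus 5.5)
Your proof is correct, but for the two substantive steps it takes a different route from the paper.

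\emph{Degree bound.} For $\delta(H)\ge p(r-1)$ the paper stays entirely inside $H$. It fixes an $r$-clique $C\ni v$ and considers, for each $u\in C\setminus\{v\}$, the extension set $X_u$ of the $(r-1)$-clique $C\setminus\{u\}$. These sets lie in $N_H(v)$, have size at least $p$, and are pairwise disjoint, since a common element would complete $C$ to a $K_{r+1}$. You instead pass back to $G$. You use $N_H(v)=V(G_v)$, then \Cref{thm:HLM-local} to get $G_v\in\mathbf W_p$ with $\alpha(G_v)=r-1$, and finally the definition of $\mathbf W_p$ applied to $p$ empty sets to produce $p$ disjoint maximum independent sets of $G_v$.

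\emph{Conical vertices.} The paper shows that the $(r-1)$-clique $Q$ with $Q\cup\{c\}\in\mathcal K_r(H)$ has codegree $1$. You instead observe that an isolated vertex of $G$ lies in every maximum independent set, so two disjoint ones cannot exist.

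\emph{Comparison.} The paper's count is elementary and uses only the clique conditions of \Cref{thm:main}(d), not even saturation, so it is self-contained on the complement side. Yours relies on the imported localization theorem, but it is more conceptual: the degree bound is just the order bound $|V|\ge p\,\alpha$ for $\mathbf W_p$ graphs, applied to $G_v$. It also yields $n\ge pr$ as a by-product, which explains why $n=rp$ is the extremal case in the final assertion.

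The abandoned induction should simply be cut from the write-up. For what it is worth, it can be repaired on the $H$ side by using the $(s-1)$-clique $K\setminus\{w\}$ instead. Its at least $p-1$ extensions other than $w$ are non-adjacent to $w$, which gives $n\ge p(s-1)+1+(p-1)=ps$.
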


\begin{proof}
By Corollary~\ref{cor:complement-correspondence}, the graph $H$ is $K_{r+1}$-saturated, every maximal clique of $H$ has size $r$, and $\delta_{r-1}^{\cl}(H)\ge p$.  If $n=r$, then $H=K_r$; the clique-codegree condition forces $p=1$, and all displayed estimates are immediate.  Hence we may assume $n\ge r+1$.  The theorem of Erd\H{o}s, Hajnal and Moon gives
\[
       e(H)\ge \sat(n,K_{r+1})=(r-1)n-\binom{r}{2}.
\]
It remains to prove the $p$-dependent bound.  We first show that $\delta(H)\ge p(r-1)$.  Fix $v\in V(H)$, and choose an $r$-clique $C$ containing $v$; such a clique exists because every maximal clique of $H$ has size $r$.  For each $u\in C\setminus\{v\}$, set
\[
       X_u=\{x\in V(H)\setminus (C\setminus\{u\}) : (C\setminus\{u\})\cup\{x\}\in\mathcal K_r(H)\}.
\]
Since $C\setminus\{u\}$ is an $(r-1)$-clique, the clique-codegree condition gives $|X_u|\ge p$.  Also $X_u\subseteq N_H(v)$, because $v\in C\setminus\{u\}$.

The sets $X_u$, for $u\in C\setminus\{v\}$, are pairwise disjoint.  Indeed, suppose that $x\in X_u\cap X_{u'}$ for two distinct vertices $u,u'\in C\setminus\{v\}$.  Since $x\in X_u$, we have $x\notin C\setminus\{u\}$; since $x\in X_{u'}$, we have $x\notin C\setminus\{u'\}$.  Thus $x\notin C$.  The same two containments say that $x$ is adjacent to every vertex of $C\setminus\{u\}$ and to every vertex of $C\setminus\{u'\}$, hence to every vertex of $C$.  Then $C\cup\{x\}$ is a $K_{r+1}$, a contradiction.  Hence
\[
       d_H(v)\ge \sum_{u\in C\setminus\{v\}} |X_u|\ge p(r-1).
\]
The handshake lemma gives
\[
       e(H)\ge \left\lceil\frac{np(r-1)}2\right\rceil,
\]
and the displayed bound for $e(G)$ follows by complementing.

Now assume $p\ge2$.  If $c$ were a conical vertex of $H$, then $c$ would lie in an $r$-clique $Q\cup\{c\}$ with $Q\in\mathcal K_{r-1}(H)$.  The vertex $c$ extends $Q$ to an $r$-clique, and no other vertex can extend $Q$, because any second extension would form a $K_{r+1}$ together with $Q\cup\{c\}$.  Thus $\codeg_H(Q)=1$, contradicting $\delta_{r-1}^{\cl}(H)\ge p\ge2$.  Hence $H$ has no conical vertex.

If $n<\frac{(3r-1)(r-1)}{3r-4}p$, then the minimum-degree bound gives
\[
       \delta(H)\ge p(r-1)>\frac{3r-4}{3r-1}n.
\]
The complete $r$-partite conclusion follows from Corollary~\ref{cor:dense}.  Finally, when $n=rp$, the strict inequality above holds because
\[
       \frac{(3r-1)(r-1)}{3r-4}-r=\frac{1}{3r-4}>0.
\]
The complete components of $G$ all have order at least $p$ and total order $rp$, so each has order exactly $p$.
\end{proof}

\subsection{Edge-localization is not necessary}
 
The next example is a sharp form of the obstruction to reversing the sufficient condition in Theorem 4.4 of \cite{HoangLevitMandrescu2025}.  That theorem assumes $G_{ab}\in\mathbf W_{p-1}$ for every edge $ab$.  The smallest case $q=2$ is drawn below; the family shows that this hypothesis is not necessary for any $p\ge2$.
 
\begin{example}[A clique blow-up of a $7$-cycle]\label{ex:doubled-c7}
Let $q\ge2$, and let $G_q$ be the graph obtained from a $7$-cycle by replacing each vertex $i\in\mathbb Z_7$ by a $q$-vertex clique $V_i$, and by making $V_i$ complete to $V_{i-1}$ and $V_{i+1}$, with indices modulo $7$.  Equivalently, $G_q$ is the lexicographic product $C_7[K_q]$.
 
The independent sets of $G_q$ are obtained by choosing at most one vertex from each $V_i$, and the chosen indices must form an independent set of $C_7$.  Since $C_7$ is well-covered with independence number $3$, the graph $G_q$ is well-covered with $\alpha(G_q)=3$.
 
Let $S$ be an independent set of $G_q$ with $|S|=2$.  The two indices used by $S$ form an independent $2$-set in $C_7$, and this set extends to a maximum independent $3$-set of $C_7$.  Each available extending index contributes all $q$ vertices of the corresponding clique $V_i$ to the fiber $F_{G_q}(S)$.  Thus $|F_{G_q}(S)|\ge q$ for every such $S$.  Moreover, equality occurs, for instance, when the two indices of $S$ are $1$ and $4$; the only extending index in $C_7$ is then $6$.  Hence
\[
        w(G_q)=q.
\]
In particular, by \Cref{thm:Wp-ridges}, $G_q\in\mathbf W_q$.
 
The graph $G_q$ is also $\alpha$-critical.  If $ab$ is an internal edge of some $V_i$, then in $G_q-ab$ the vertices $a,b$, together with one vertex from each of $V_{i+2}$ and $V_{i+4}$, form an independent set of size $4$.  If $ab$ joins $V_i$ and $V_{i+1}$, then in $G_q-ab$ the vertices $a,b$, together with one vertex from each of $V_{i+3}$ and $V_{i+5}$, form an independent set of size $4$.  Since $\alpha(G_q)=3$, every edge is $\alpha$-critical.
 
Now choose $a\in V_1$ and $b\in V_2$ with $ab\in E(G_q)$.  The localization $(G_q)_{ab}=G_q-(N_{G_q}(a)\cup N_{G_q}(b))$ is the induced subgraph on $V_4\cup V_5\cup V_6$.  In this graph a single vertex of $V_5$ is a maximal independent set of size $1$, while choosing one vertex from $V_4$ and one vertex from $V_6$ gives an independent set of size $2$.  Therefore $(G_q)_{ab}$ is not well-covered.  In particular, $(G_q)_{ab}\notin\mathbf W_1$, and hence $(G_q)_{ab}\notin\mathbf W_{q-1}$.
\end{example}
 
\begin{figure}[H]
\centering
\begin{tikzpicture}[scale=0.82,
  point/.style={circle, draw, fill=white, inner sep=1.2pt, minimum size=6pt},
  block/.style={draw, rounded corners, inner sep=6pt},
  classlabel/.style={draw=none, fill=none, font=\small}
]
\node[point,label={[classlabel]above:$a$}] (v1a) at (0.00,3.25) {};
\node[point] (v1b) at (0.00,2.75) {};
\node[point,label={[classlabel]above right:$b$}] (v2a) at (2.25,1.95) {};
\node[point] (v2b) at (1.90,1.45) {};
\node[point] (v3a) at (2.35,-0.65) {};
\node[point] (v3b) at (1.90,-1.15) {};
\node[point] (v4a) at (1.05,-2.65) {};
\node[point] (v4b) at (1.55,-3.10) {};
\node[point] (v5a) at (-1.05,-2.65) {};
\node[point] (v5b) at (-1.55,-3.10) {};
\node[point] (v6a) at (-2.35,-0.65) {};
\node[point] (v6b) at (-1.90,-1.15) {};
\node[point] (v7a) at (-2.25,1.95) {};
\node[point] (v7b) at (-1.90,1.45) {};

\begin{scope}[on background layer]
\node[block, fit=(v1a)(v1b), label={[classlabel]above:$V_1$}] {};
\node[block, fit=(v2a)(v2b), label={[classlabel]above right:$V_2$}] {};
\node[block, fit=(v3a)(v3b), label={[classlabel]right:$V_3$}] {};
\node[block, fit=(v4a)(v4b), label={[classlabel]below right:$V_4$}] {};
\node[block, fit=(v5a)(v5b), label={[classlabel]below left:$V_5$}] {};
\node[block, fit=(v6a)(v6b), label={[classlabel]left:$V_6$}] {};
\node[block, fit=(v7a)(v7b), label={[classlabel]above left:$V_7$}] {};
\end{scope}

\draw (v1a)--(v1b); \draw (v2a)--(v2b); \draw (v3a)--(v3b); \draw (v4a)--(v4b);
\draw (v5a)--(v5b); \draw (v6a)--(v6b); \draw (v7a)--(v7b);
\foreach \x in {a,b}{\foreach \y in {a,b}{
  \draw (v1\x)--(v2\y); \draw (v2\x)--(v3\y); \draw (v3\x)--(v4\y);
  \draw (v4\x)--(v5\y); \draw (v5\x)--(v6\y); \draw (v6\x)--(v7\y); \draw (v7\x)--(v1\y);
}}
\end{tikzpicture}
\caption{The $q=2$ member of the clique blow-up family $G_q=C_7[K_q]$.  Each $V_i$ is a clique, and consecutive classes are joined completely.  The general case is obtained by replacing each displayed pair by a $q$-vertex clique.  For $a\in V_1$ and $b\in V_2$, the localization $(G_q)_{ab}$ is induced by $V_4\cup V_5\cup V_6$, which is not well-covered.}
\label{fig:doubled-c7}
\end{figure}
 
The Petersen graph gives a complementary instance of the same phenomenon in the especially transparent case $\alpha(G)=2$.
 
\begin{example}[The complement of the Petersen graph]\label{ex:petersen}
Let $P$ be the Petersen graph and let $G=\overline P$.  The Petersen graph is triangle-free, has diameter two, and is $3$-regular; hence it is maximal triangle-free.  By Corollary~\ref{cor:alpha2}, $G$ is $\alpha$-critical, $\alpha(G)=2$, and
\[
\widx(G)=\delta(P)=3.
\]
Thus $G\in\mathbf W_3$ and $G\notin\mathbf W_4$.
 
However, if $ab\in E(G)$, then $a$ and $b$ are non-adjacent in $P$.  In the Petersen graph any two non-adjacent vertices have exactly one common neighbor.  Therefore $G_{ab}\cong K_1$ for every edge $ab$ of $G$.  Since $K_1\notin\mathbf W_2$, the condition $G_{ab}\in\mathbf W_{p-1}$ fails for $p=3$ although $G$ is $\alpha$-critical and belongs to $\mathbf W_3$.
\end{example}
 
\begin{figure}[H]
\centering
\begin{tikzpicture}[scale=1.1, every node/.style={circle, draw, fill=white, inner sep=1.5pt, minimum size=6pt}]
\foreach \i in {0,...,4}{
 \node (o\i) at ({90+72*\i}:2.0) {};
 \node (i\i) at ({90+72*\i}:0.85) {};
}
\foreach \i in {0,...,4}{
 \pgfmathtruncatemacro{\j}{mod(\i+1,5)}
 \draw (o\i)--(o\j);
 \draw (o\i)--(i\i);
}
\draw (i0)--(i2)--(i4)--(i1)--(i3)--(i0);
\end{tikzpicture}
\caption{The Petersen graph $P$.  Its complement is an $\alpha$-critical graph in $\mathbf W_3$ with $\alpha=2$, by the maximal triangle-free characterization.}
\label{fig:petersen}
\end{figure}

\section{An algebraic corollary and a homological question}
 
Fix a field $k$.  For a simple graph $G$ with vertex set $V(G)=\{x_1,\ldots,x_n\}$, the associated edge ideal of $G$ is the quadratic squarefree monomial ideal $I(G)=(x_ix_j \mid x_ix_j\in E(G))$ in the polynomial ring $R=k[x_1,\ldots, x_n]$.  We say that $G$ is Cohen--Macaulay, respectively Gorenstein, over $k$ if $R/I(G)$ is Cohen--Macaulay, respectively Gorenstein.  For a graph without isolated vertices,
\[
\text{Gorenstein over }k \Longrightarrow \text{Cohen--Macaulay over }k \Longrightarrow \text{well-covered}.
\]
 
Hoang and Trung proved that, for graphs without isolated vertices, the property of being triangle-free and Gorenstein over the field $k$ is equivalent to being triangle-free and belonging to $\mathbf W_2$ \cite{HoangTrung2016}.  Combining their theorem with \Cref{thm:Wp-ridges} gives the following reformulation in ridge and complement language.
 
\begin{corollary}\label{cor:gorenstein-ridge}
Let $k$ be a field.  Let $G$ be a graph without isolated vertices, let $r=\alpha(G)$, and put $H=\overline G$.  The following are equivalent.
\begin{enumerate}[(a)]
 \item $G$ is triangle-free and Gorenstein over $k$.
 \item $G$ is triangle-free and belongs to $\mathbf W_2$.
 \item $G$ is triangle-free, well-covered, and $|F_G(S)|\ge 2$ for every $S\in\Omega^*(G)$ with $|S|=r-1$.
 \item Every maximal clique of $H$ has size $r$, $\alpha(H)= 2$, and $\delta_{r-1}^{\cl}(H)\ge 2$.
\end{enumerate}
\end{corollary}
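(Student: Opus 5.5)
The plan is to chain the equivalences (a)$\Leftrightarrow$(b)$\Leftrightarrow$(c)$\Leftrightarrow$(d). The first is the cited theorem of Hoang and Trung, used as a black box. The remaining two follow from \Cref{thm:Wp-ridges} and from translating the conditions to the complement $H=\overline G$.

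For (b)$\Leftrightarrow$(c), apply \Cref{thm:Wp-ridges} (a)$\Leftrightarrow$(c) with $p=2$. It says that $G\in\mathbf W_2$ exactly when $G$ is well-covered and every fiber $F_G(S)$ with $|S|=r-1$ has size at least $2$. Adding the hypothesis ``triangle-free'' to both sides gives the equivalence immediately.

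For (c)$\Leftrightarrow$(d), translate each condition separately.
\begin{itemize}
\item \emph{Triangle-free.} $G$ is triangle-free if and only if $\overline G=H$ has no independent set of size $3$, that is, $\alpha(H)\le 2$.
\item \emph{Well-covered.} As in the proof of \Cref{thm:main}, $G$ is well-covered with $\alpha(G)=r$ exactly when every maximal clique of $H$ has size $r$.
\item \emph{Fiber sizes.} By \Cref{cor:exact-index}, the fiber condition is exactly $\delta_{r-1}^{\cl}(H)\ge 2$.
\end{itemize}

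The one point needing care is upgrading $\alpha(H)\le2$ to $\alpha(H)=2$. This means showing that $H$ is not complete, i.e.\ that $G$ has at least one edge. Since $G$ has no isolated vertices and is non-empty, $G$ has an edge, so $H$ has a non-edge and $\alpha(H)\ge 2$. Conversely, $\alpha(H)=2$ gives $\alpha(H)\le2$, so this direction needs nothing. I must also check that the ``no isolated vertices'' hypothesis is carried along unchanged: it is a standing assumption on $G$, so it is present in every item.

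I expect no real obstacle. The only delicate points are this $\alpha(H)=2$ versus $\le2$ bookkeeping and the degenerate case $r=1$. If $r=1$, then $G$ is complete, and the absence of isolated vertices forces $n\ge2$. Item (c) then asks for $|V(G)|\ge2$ via the convention $\delta_0^{\cl}(H)=|V(H)|$. In (d), $H$ is edgeless with $\alpha(H)=n$, so $\alpha(H)=2$ forces $n=2$, which is consistent with $G=K_2$ being triangle-free. On the (c) side, a complete graph is triangle-free only if $n\le2$, so both sides reduce to $G=K_2$. I would verify this case briefly by hand and otherwise assume $r\ge2$.
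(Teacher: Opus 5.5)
Your proposal is correct and follows the paper's proof: (a)$\Leftrightarrow$(b) by the Hoang--Trung theorem, (b)$\Leftrightarrow$(c) by the ridge-thickness theorem with $p=2$, and (c)$\Leftrightarrow$(d) by translating triangle-freeness, well-coveredness, and fiber sizes to the complement. Your bookkeeping for $\alpha(H)=2$ is slightly more careful than the paper's, which states that ``triangle-free without isolated vertices'' is equivalent to $\alpha(\overline G)=2$; that equivalence fails in general (e.g.\ for $K_2\cup K_1$) but is harmless here because the absence of isolated vertices is a standing hypothesis, and you use that hypothesis only to obtain $\alpha(H)\ge 2$.
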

 
\begin{proof}
The equivalence of (a) and (b) is the triangle-free Gorenstein theorem of Hoang and Trung over the field $k$ \cite{HoangTrung2016}.  The equivalence of (b) and (c) is \Cref{thm:Wp-ridges} with $p=2$.  Finally, $G$ is triangle-free without isolated vertices  if and only if $\alpha(\overline G)=2$, well-coveredness of $G$ is equivalent to all maximal cliques of $H=\overline G$ having size $r$, and the fiber size condition is precisely the clique-codegree condition $\delta_{r-1}^{\cl}(H)\ge2$.
\end{proof}
 
\begin{question}\label{q:triangle-free-CM}
For a fixed field $k$, characterize the triangle-free graphs $G$ for which $R/I(G)$ is Cohen--Macaulay over $k$.
\end{question}
 
The field should be specified in Question~\ref{q:triangle-free-CM}: outside the triangle-free Gorenstein setting, Cohen--Macaulay and Gorenstein behavior of independence complexes may depend on the base field.  A more focused version is to describe the additional homological constraints, beyond purity and the ridge-thickness conditions above, that distinguish the triangle-free Cohen--Macaulay graphs among triangle-free well-covered graphs.

\section{Closing remarks and further directions}
 
The ridge-thickness characterization, \Cref{thm:Wp-ridges}, is independent of $\alpha$-criticality: for every well-covered graph it computes the exact largest $p$ for which the graph lies in $\mathbf W_p$.  The main theorem then adds one further condition, namely generation of all missing edges by ridge links, or equivalently $K_{r+1}$-saturation in the complement.
 
Thus the characterization separates a well-covered extension problem from an edge-criticality problem.  Examples~\ref{ex:doubled-c7} and \ref{ex:petersen} also show that the edge-localization condition $G_{ab}\in\mathbf W_{p-1}$ is a genuinely stronger local hypothesis; it is useful as a sufficient condition, but it is not forced by $\alpha$-criticality together with membership in $\mathbf W_p$.
 
This leaves a sharper local problem between the sufficient condition of \cite{HoangLevitMandrescu2025} and the global characterization above.
 
\begin{question}[Edge-localization refinement]\label{q:edge-localization}
Let $p\ge2$.  Characterize the $\alpha$-critical graphs $G\in\mathbf W_p$ for which
\[
        G_{ab}\in\mathbf W_{p-1}
\]
for every edge $ab\in E(G)$.
\end{question}
 
In the complement language, if $H=\overline G$ and $ab$ is a non-edge of $H$, then
\[
        \overline{G_{ab}}=H[N_H(a)\cap N_H(b)].
\]
Since $H$ is $K_{r+1}$-saturated, this common-neighborhood graph has clique number $r-1$.  Therefore Question~\ref{q:edge-localization} asks which graphs in this complement correspondence have all such common-neighborhood graphs pure with minimum $(r-2)$-clique-codegree at least $p-1$.
 
The complement correspondence in Corollary~\ref{cor:complement-correspondence} is meant to be used in both directions.  Starting from $G$, it turns the independence-extension condition into a clique-codegree lower bound in $\overline G$; starting from a saturated graph $H$, it constructs an $\alpha$-critical graph $\overline H$ and computes its $\mathbf W$-index.  The saturation consequences in Corollaries~\ref{cor:dense} and~\ref{cor:edge-bound} illustrate this transfer: extremal, degree, and order information about $K_{r+1}$-saturated complements becomes structural information about $\alpha$-critical $\mathbf W_p$ graphs.  A natural next saturation problem is to determine the sharp minimum of $e(H)$ among $n$-vertex graphs satisfying the three conditions in Corollary~\ref{cor:complement-correspondence}, as a function of $n,r$, and $p$.

 \section*{Data availability}
 Data sharing is not applicable to this article, as no datasets were generated or analyzed during the current study.
 
 \section*{Declarations}
 \noindent\textbf{Conflict of interest.} The authors declare that they have no conflict of interest.


\begin{thebibliography}{99}
 
 \bibitem{AndrasfaiErdosSos1974}
 B. Andr\'{a}sfai, P. Erd\H{o}s, and V. T. S\'{o}s,
 \newblock On the connection between chromatic number, maximal clique and minimal degree of a graph,
 \newblock \emph{Discrete Mathematics} 8 (1974), no. 3, 205--218.
 
 \bibitem{BeinekeHararyPlummer1967}
 L. W. Beineke, F. Harary, and M. D. Plummer,
 \newblock On the critical lines of a graph,
 \newblock \emph{Pacific Journal of Mathematics} 22 (1967), no. 2, 205--212.
 
 \bibitem{Berge1982}
 C. Berge,
 \newblock Some common properties for regularizable graphs, edge-critical graphs and B-graphs,
 \newblock \emph{Annals of Discrete Mathematics} 12 (1982), 31--44.
 
 \bibitem{BermudoFernau2012}
 S. Bermudo and H. Fernau,
 \newblock Lower bounds on the differential of a graph,
 \newblock \emph{Discrete Mathematics} 312 (2012), 3236--3250.
 
 \bibitem{CastrillonCruzReyes2016}
 I. D. Castrill\'{o}n, R. Cruz, and E. Reyes,
 \newblock On well-covered, vertex decomposable and Cohen--Macaulay graphs,
 \newblock \emph{Electronic Journal of Combinatorics} 23 (2016), no. 2, 17 pp.
 
 \bibitem{ErdosGallai1961}
 P. Erd\H{o}s and T. Gallai,
 \newblock On the minimal number of vertices representing the edges of a graph,
 \newblock \emph{Publications of the Mathematical Institute of the Hungarian Academy of Sciences} 6 (1961), 181--203.
 
 \bibitem{ErdosHajnalMoon1964}
 P. Erd\H{o}s, A. Hajnal, and J. W. Moon,
 \newblock A problem in graph theory,
 \newblock \emph{The American Mathematical Monthly} 71 (1964), no. 10, 1107--1110.
 
 \bibitem{Favaron1982}
 O. Favaron,
 \newblock Very well-covered graphs,
 \newblock \emph{Discrete Mathematics} 42 (1982), 177--187.
 
 \bibitem{FinbowHartnellNowakowski1993}
 A. Finbow, B. Hartnell, and R. Nowakowski,
 \newblock A characterization of well-covered graphs of girth 5 or greater,
 \newblock \emph{Journal of Combinatorial Theory. Series B} 57 (1993), 44--68.
 
 \bibitem{HoangLevitMandrescuPhamSSRN}
 D. T. Hoang, V. E. Levit, E. Mandrescu, and M. H. Pham,
 \newblock On the unimodality of the independence polynomial of clique corona graphs,
 \newblock Available online at SSRN: \url{http://dx.doi.org/10.2139/ssrn.4293649}.
 
 \bibitem{HoangLevitMandrescuPhamLog}
 D. T. Hoang, V. E. Levit, E. Mandrescu, and M. H. Pham,
 \newblock Log-concavity of the independence polynomials of $\mathbf W_p$ graphs, \newblock \emph{Discrete Mathematics}  349 (2026), 115109.
   
 \bibitem{HoangTrung2016}
 D. T. Hoang and T. N. Trung,
 \newblock A characterization of triangle-free Gorenstein graphs and Cohen--Macaulayness of second powers of edge ideals,
 \newblock \emph{Journal of Algebraic Combinatorics} 43 (2016), 325--338.
 
 \bibitem{HoangTrung2018}
 D. T. Hoang and T. N. Trung,
 \newblock Buchsbaumness of the second powers of edge ideals,
 \newblock \emph{Journal of Algebra and Its Applications} 17 (2018), no. 6, 1850117.
 
 \bibitem{HoangLevitMandrescu2025}
 D. T. Hoang, V. E. Levit, and E. Mandrescu,
 \newblock Structural properties and characterizations of $\mathbf W_p$ class,
 \newblock   \emph{Discussiones Mathematicae Graph Theory},  \newblock Available online at   \url{https://doi.org/10.7151/dmgt.2623}.
 
 \bibitem{JaramilloVillarreal2021}
 D. Jaramillo and R. H. Villarreal,
 \newblock The $v$-number of edge ideals,
 \newblock \emph{Journal of Combinatorial Theory. Series A} 177 (2021), 105310.
 
 \bibitem{LevitMandrescu2017}
 V. E. Levit and E. Mandrescu,
 \newblock The Roller-Coaster conjecture revisited,
 \newblock \emph{Graphs and Combinatorics} 33 (2017), 1499--1508.
 
 \bibitem{LevitMandrescu2019}
 V. E. Levit and E. Mandrescu,
 \newblock 1-well-covered graphs revisited,
 \newblock \emph{European Journal of Combinatorics} 80 (2019), 261--272.
 
 \bibitem{Pinter1995}
 M. R. Pinter,
 \newblock A class of planar well-covered graphs with girth four,
 \newblock \emph{Journal of Graph Theory} 19 (1995), 69--81.
 
 \bibitem{Pinter1992}
 M. R. Pinter,
 \newblock Planar regular one-well-covered graphs,
 \newblock \emph{Congressus Numerantium} 91 (1992), 159--159.
 
 \bibitem{Pinter1991}
 M. R. Pinter,
 \newblock $W_2$ graphs and strongly well-covered graphs: two well-covered graph subclasses,
 \newblock Ph.D. thesis, Vanderbilt University, Department of Mathematics, 1991.
 
 \bibitem{Plummer1970}
 M. D. Plummer,
 \newblock Some covering concepts in graphs,
 \newblock \emph{Journal of Combinatorial Theory} 8 (1970), 91--98.
 
 \bibitem{Plummer1993}
 M. D. Plummer,
 \newblock Well-covered graphs: survey,
 \newblock \emph{Quaestiones Mathematicae} 16 (1993), 253--287.
 
 \bibitem{Plummer1967}
 M. D. Plummer,
 \newblock On a family of line-critical graphs,
 \newblock \emph{Monatshefte f\"ur Mathematik} 71 (1967), no. 1, 40--48.
 
 \bibitem{Staples1975}
 J. W. Staples,
 \newblock On some subclasses of well-covered graphs,
 \newblock Ph.D. thesis, Vanderbilt University, 1975.
 
 \bibitem{Staples1979}
 J. W. Staples,
 \newblock On some subclasses of well-covered graphs,
 \newblock \emph{Journal of Graph Theory} 3 (1979), 197--204.
 
 \bibitem{ToppVolkmann1992}
 J. Topp and L. Volkmann,
 \newblock On the well-coveredness of products of graphs,
 \newblock \emph{Ars Combinatoria} 33 (1992), 199--215.
 
 \bibitem{Woodroofe2009}
 R. Woodroofe,
 \newblock Vertex decomposable graphs and obstructions to shellability,
 \newblock \emph{Proceedings of the American Mathematical Society} 137 (2009), 3235--3246.
 
 \end{thebibliography}
\end{document}